\definecolor{indigo}{rgb}{0.29, 0.0, 0.51}  
\theoremstyle{plain}
\newtheorem{theorem}{Theorem}
\newtheorem{corollary}[theorem]{Corollary}
\newtheorem{proposition}[theorem]{Proposition}
\newtheorem{lemma}[theorem]{Lemma}
\newtheorem{conjecture}[theorem]{Conjecture}
\theoremstyle{definition}
\theoremstyle{remark}
\newtheorem{remark}[theorem]{Remark}
\newtheorem{example}[theorem]{Example}
\numberwithin{theorem}{section}
\newcommand{\dfn}[1]{{\em #1}}        
\newcommand{\R}{\mathbb{R}}           
\newcommand{\Q}{\mathbb{Q}}           
\newcommand{\modp}[1]{\;(\!\!\!\!\!\!\mod #1)}
\newcommand{\case}[1]{\begin{cases} #1 \end{cases}} 
\newcommand*\bigcdot{\mathpalette\bigcdot@{0.6}}
\newcommand*\bigcdot@[2]{\mathbin{\vcenter{\hbox{\scalebox{#2}{$\m@th#1\bullet$}}}}}
\DeclareMathOperator\tb{tb}                   
\DeclareFontFamily{U} {cmr}{}
\DeclareFontShape{U}{cmr}{m}{n}{
  <-6> cmr5
  <6-7> cmr6
  <7-8> cmr7
  <8-9> cmr8
  <9-10> cmr9
  <10-12> cmr8
  <12-> cmr9}{}
\DeclareSymbolFont{Xcmr} {U} {cmr}{m}{n}
\DeclareMathSymbol{\Phi}{\mathord}{Xcmr}{8}
\begin{document}

\title[Strongly fillable contact structures without Liouville fillings]{Strongly fillable contact structures\\without Liouville fillings} 

\author{Hyunki Min}
\address{Department of Mathematics \\ Massachusetts Institute of Technology \\ Cambridge, MA}
\email{hkmin@mit.edu}


\begin{abstract}
  We introduce a new method to obstruct Liouville and weak fillability. Using this, we show that various rational homology 3--spheres admit strongly fillable contact structures without Liouville fillings, which extends the result of Ghiggini on a family of Brieskorn spheres. We also make partial progress on a conjecture of Ghiggini and Van-Horn-Morris. 
\end{abstract}

\maketitle

\section{Introduction}

One of the fundamental problems in contact geometry is to determine which contact structure bounds a symplectic manifold with a certain boundary condition, which is called a \dfn{symplectic filling}. There is a hierarchy among contact structures according to the type of symplectic fillings they admit: 
\[
  \{\text{Stein fillable}\} \subsetneq \{\text{Liouville fillable}\} \subsetneq \{\text{Strongly fillable}\} \subsetneq \{\text{Weakly fillable}\} \subsetneq \{\text{Tight}\}.
\]
It is known that all inclusions are proper in every dimension. The properness of each inclusion in $3$-dimension was shown by Bowden \cite{Bowden:noStein}, Ghiggini \cite{Ghiggini:nonfillable}, Eliashberg \cite{Eliashberg:nostrong}, and Etnyre and Honda \cite{EH:nofilling}, respectively. Also, the properness of each inclusion in higher dimensions was shown by Bowden, Crowley and Stipsicz \cite{BCS:nostein}, Zhou \cite{Zhou:nonfillable-highdim,Zhou:nonfillable2} (see also Ghiggini and Niederkr\"uger \cite{GN:nonfillable-highdim}), Bowden, Gironella and Moreno \cite{BGM:nostrong}, and Massot, Niederkr\"uger and Wendl \cite{MNW:nofilling}, respectively.

In this paper, we focus on the second and fourth inclusions in $3$-dimension, which are still not very well understood. For the second inclusion, the only known results so far are by Ghiggini \cite{Ghiggini:nonfillable} on a family of Brieskorn spheres $-\Sigma(2,3,6n-1)$ for $n\geq3$ and Tosun \cite{Tosun:classification} on $-\Sigma(2,3,6n+1)$ for $n\geq2$ (by applying the same argument of Ghiggini). Ghiggini used contact invariants in Heegaard Floer homology, so the nature of the obstruction is gauge theoretic, while most filling obstructions come from the holomorphic curve theory.  

In Theorem~\ref{thm:mixed} and Proposition~\ref{prop:mixed}, we introduce a new method to obstruct Liouville fillability. We prove that a large class of Legendrian knots, called \dfn{mixed Legendrian knots}, preserve Liouville non-fillability under contact surgery. An immediate application is that these Legendrian knots in a contact manifold containing planar torsion obstruct Liouville fillability under contact surgery, see Corollary~\ref{cor:planar-torsion}. To prove Theorem~\ref{thm:mixed}, we use the result of Menke \cite{Menke:decomposition} that we can decompose a Liouville domain along a properly embedded solid torus bounded by a \dfn{mixed torus}. Since Menke used holomorphic curves to prove his result, the nature of this obstruction is from the holomorphic curve theory. 

In Theorem~\ref{thm:non-Liouville}, we construct various contact $3$--manifolds that admit strong fillings but do not admit Liouville fillings using Theorem~\ref{thm:mixed}. This can be considered as a generalization of the results of Ghiggini and Tosun. This is because $-\Sigma(2,3,6n\pm1)$ are obtained by $1/n$--surgery on the trefoils, while the manifolds constructed in Theorem~\ref{thm:non-Liouville} are obtained by surgeries on any genus one fibered knot in any rational homology sphere. One interesting aspect is that the surgery coefficients depend on the fractional Dehn twist coefficient of the monodromy of a given knot. 

For the fourth inclusion, there have been several results on Seifert fibered spaces, see \cite{EH:nofilling,GLS:noweak, Matkovic:weak, PVHM:weak} for examples. In Theorem~\ref{thm:mixed}, we prove that a mixed Legendrian knot also preserves weak non-fillability under contact surgery. Using this, we show that various $3$--manifolds admit a tight contact structure without weak fillings, which were not accessible previously. See the examples in Section~\ref{sec:non-weak}.  


In Section~\ref{sec:Brieskorn}, we make partial progress on a conjecture of Ghiggini and Van-Horn-Morris \cite[Conjecture~1.2]{GVHM:classification}. In Theorem~\ref{thm:triangle}, we show that most of the tight contact structures on $-\Sigma(2,3,6n\pm1)$ are not Liouville fillable, while in the forthcoming paper with Etnyre and Tosun \cite{EMT:torus}, we disprove the conjecture by finding Stein fillings of some tight contact structures on $-\Sigma(2,3,23)$.

\subsection{Contact structures without Liouville fillings} \label{sec:non-liouville}
Let $L$ be a Legendrian knot. We say $L$ is a \dfn{mixed Legendrian knot} if $L = S_+S_-(L')$ for some Legendrian knot $L'$. In other words, $L$ is a doubly stabilized Legendrian knot with different signs. Now we state our first main theorem that contact surgery on a mixed Legendrian knot preserves Liouville and weak non-fillability. 

\begin{theorem} \label{thm:mixed}
  Suppose $L$ is a mixed Legendrian knot in a closed contact $3$--manifold $(Y,\xi)$. Let $(Y_{(r)}(L),\xi_{(r)})$ be a result of contact $(r)$--surgery on $L$. If $\xi$ is not Liouville (resp.~weakly) fillable, then $\xi_{(r)}$ is also not Liouville (resp.~weakly) fillable for any $r \in \mathbb{Q}$.  
\end{theorem}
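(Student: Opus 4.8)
The plan is to contrapose: assuming $\xi_{(r)}$ admits a Liouville (resp.\ weak) filling $(W,\omega)$, I will produce a Liouville (resp.\ weak) filling of $(Y,\xi)$, contradicting the hypothesis. The key geometric input is that a mixed Legendrian knot produces, after contact surgery, a \emph{mixed torus} in $(Y_{(r)}(L),\xi_{(r)})$ in the sense of Menke. Concretely, since $L = S_+S_-(L')$, a standard neighborhood of $L$ contains the boundary torus of a neighborhood of $L'$, and the two stabilizations of opposite sign mean that the dividing set on this torus has the combinatorial type required for Menke's construction; the solid torus $N$ it bounds inside the surgered manifold is the region swept out by the surgery and a collar of $L'$. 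I would first carefully set up this neighborhood picture: describe $L'$, its standard neighborhood, the effect of the two stabilizations on the convex boundary torus, and verify that after $(r)$-surgery on $L$ the resulting torus $T$ is a mixed torus with $N$ a properly embedded solid torus it bounds.

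Next I would invoke Menke's decomposition theorem (the result of \cite{Menke:decomposition} cited in the excerpt): a Liouville filling $(W,\omega)$ of a contact manifold containing such a mixed torus $T$ can be cut along a properly embedded hypersurface lying over $N$, and the pieces can be reassembled—after discarding a standard Liouville piece and regluing—into a Liouville filling of a contact manifold $(Y',\xi')$ obtained by an \emph{inverse} operation on $T$. I would then identify this $(Y',\xi')$ with $(Y,\xi)$: the point is that undoing the mixed torus surgery exactly reverses the contact surgery on the doubly stabilized knot, recovering the original manifold. This requires a Kirby/contact-surgery bookkeeping argument showing that the topological and contact data match, and here one uses the flexibility afforded by the stabilizations (in particular that contact surgery on a stabilized knot can be traded against Legendrian surgery diagrams in the standard way). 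For the weak case, I would check that Menke's argument, or its adaptation, respects the weaker condition $\omega|_\xi > 0$ on the boundary—this typically goes through because the cut-and-paste is supported away from a neighborhood of the boundary contact structure, and the symplectic form can be kept cohomologically controlled near $T$.

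The main obstacle I anticipate is making the identification $(Y',\xi') \cong (Y,\xi)$ genuinely rigorous at the level of contact structures, not just smooth manifolds: Menke's operation is stated for mixed tori in a somewhat canonical form, so one must verify that \emph{any} mixed Legendrian knot, after surgery with \emph{arbitrary} rational coefficient $r$, yields a torus in exactly that canonical form, and that the inverse operation lands back on the nose at $\xi$ rather than at some contactomorphic-but-hard-to-recognize structure. A secondary subtlety is the weak-filling version of Menke's theorem, which may not be stated in \cite{Menke:decomposition} in the form needed; I would either cite the appropriate refinement or indicate how to upgrade the Liouville argument, noting that the holomorphic curves Menke uses persist under the weak condition once one fixes a taming form. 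Routine surgery-diagram calculus and the verification that stabilization signs produce the right dividing curves I would relegate to a lemma and not belabor here.
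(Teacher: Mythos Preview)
Your overall strategy---locate a mixed torus in the surgered manifold, apply Menke's decomposition theorem, and recover a filling of $(Y,\xi)$---is exactly the paper's approach. However, you are missing the central technical point, and your description of Menke's theorem is inaccurate in a way that matters.

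Menke's theorem does \emph{not} provide a canonical ``inverse operation.'' Given a mixed torus $T$ with mixed neighborhood $T^2\times[-1,1]$ (dividing slopes $s_{-1},s_0,s_1$), the theorem says the filling can be cut so that one glues in solid tori of meridional slope $s$ for \emph{some} $s$ with $|s\bigcdot s_0|=1$ lying anticlockwise of $s_{-1}$ and clockwise of $s_1$ on the Farey graph. A priori several such $s$ are allowed, and the resulting $(Y',\xi')$ depends on which one actually occurs; you do not get to choose. The heart of the paper's argument is that the mixed neighborhood coming from the double stabilization is large enough to force $s$ to be \emph{unique}: in the framing of $S_-(L')$ one has $s_0=0$, $s_{-1}=-1$, $s_1=1$, so the constraints become $|s|>1$ together with $|s\bigcdot 0|=1$, and the only solution is $s=\infty$. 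Once $s=\infty$ is pinned down, one reads off that the two pieces are $(Y,\xi)$ (since $S(\infty,0;l)$ has a unique tight structure) and a lens space, and the contradiction follows. Your proposal frames the difficulty as ``Kirby/contact-surgery bookkeeping'' to identify $(Y',\xi')$ with $(Y,\xi)$; that is not the issue. The issue is ruling out all the other possible Menke decompositions, and you have not identified this step.

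A second gap: you treat all $r\in\mathbb{Q}$ uniformly, but for $r\geq 0$ the mixed neighborhood does not survive the surgery in the required form. The paper handles $r\geq 0$ by a separate short argument showing that the surgered contact structure is overtwisted (the glued-in basic slice together with the virtually overtwisted $T^2\times[-1,1]$ would have to be a single basic slice, which is universally tight---contradiction), so non-fillability is automatic there. The Menke argument is run only for $r<0$, where the surgery coefficient $r-1<-1$ in the chosen framing guarantees that $T_{-1}$, $T$, and $T_1$ all persist in $Y_{(r)}(L)$.

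Minor point: you write that ``a standard neighborhood of $L$ contains the boundary torus of a neighborhood of $L'$''; the containment goes the other way, $N(L)\subset N(L')$. The weak-filling concern you raise is handled in the paper by a remark that Menke's proof goes through verbatim for weak fillings.
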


\begin{remark} \label{rmk:overtwisted}
  In fact, any non-negative contact surgery on a mixed Legendrian knot produces an overtwisted contact structure. See the proof of Theorem~\ref{thm:mixed}.
\end{remark}
 
See Proposition~\ref{prop:mixed} for a simple version of the theorem. Also notice that if $L$ is null-homologous, then contact $(r)$--surgery is topological $(\tb(L)+r)$--surgery.

In \cite{Wendl:hierarchy}, Wendl defined \dfn{planar torsion}, which is an obstruction for strong fillability. A direct application of Theorem~\ref{thm:mixed} is that mixed Legendrian knots in a contact $3$--manifold containing planar torsion obstruct Liouville fillability under contact surgery. 

\begin{corollary} \label{cor:planar-torsion}
  Suppose $L$ is a mixed Legendrian knot in a closed contact $3$--manifold $(Y,\xi)$ containing planar $k$--torsion for $k \geq 0$. Let $(Y_{(r)}(L),\xi_{(r)})$ be a result of contact $(r)$--surgery on $L$. Then $\xi_{(r)}$ is not Liouville fillable for any $r \in \mathbb{Q}$.
\end{corollary}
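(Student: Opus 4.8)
The strategy is to reduce the corollary to Theorem~\ref{thm:mixed} by verifying that a contact $3$--manifold $(Y,\xi)$ containing planar $k$--torsion is not Liouville fillable, and then invoking the theorem directly. First I would recall Wendl's theorem from \cite{Wendl:hierarchy}: if $(Y,\xi)$ admits a supporting open book with a planar $k$--torsion domain for some $k \geq 0$, then $(Y,\xi)$ is not strongly fillable (and in fact the planar torsion gives a symplectic field theory / ECH obstruction to strong semi-fillings). Since a Liouville filling is in particular a strong filling, it follows immediately that $(Y,\xi)$ is not Liouville fillable. This is the only external input beyond the theorem.

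Next I would apply Theorem~\ref{thm:mixed} with the hypothesis ``$\xi$ is not Liouville fillable'' now established. The theorem asserts that if $L$ is a mixed Legendrian knot in $(Y,\xi)$ and $\xi$ is not Liouville fillable, then the result $\xi_{(r)}$ of contact $(r)$--surgery on $L$ is not Liouville fillable for any $r \in \mathbb{Q}$. Since by hypothesis $L$ is a mixed Legendrian knot and $(Y,\xi)$ contains planar $k$--torsion, both hypotheses of the theorem are met, and the conclusion is exactly the statement of the corollary.

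There is essentially no obstacle here: the corollary is a formal consequence of Theorem~\ref{thm:mixed} once one observes that planar torsion obstructs strong (hence Liouville) fillability. The only point that deserves a sentence of care is the logical direction — one must make sure that ``contains planar $k$--torsion $\Rightarrow$ not strongly fillable $\Rightarrow$ not Liouville fillable,'' rather than conflating the filling types; Theorem~\ref{thm:mixed} then propagates the (stronger) conclusion ``not Liouville fillable'' through the surgery, even though the obstruction we started from a priori only rules out strong fillings. I would phrase the proof in two lines: cite Wendl for non-strong-fillability of $(Y,\xi)$, deduce non-Liouville-fillability, and apply Theorem~\ref{thm:mixed}.
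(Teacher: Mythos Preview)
Your proposal is correct and follows exactly the same approach as the paper: cite Wendl \cite{Wendl:hierarchy} to conclude that planar $k$--torsion obstructs strong fillability, hence Liouville fillability, and then apply Theorem~\ref{thm:mixed}. The paper's proof is the two-line version you describe.
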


Using Theorem~\ref{thm:mixed}, we can find various homology $3$--spheres which admit a strongly fillable contact structure without Liouville fillings. For any $s \in \Q \cup \{\infty\}$, we define a set of (extended) rational numbers $\mathcal{R}(s)$ as follows: 
\[
  \mathcal{R}(s) := \case{(0,s) & s > 0, \\ (0,\infty) & s = \infty, \\ (0,\infty] \cup (-\infty, s) & s < 0.} 
\]
Let $K$ be a fibered knot in a closed $3$--manifold $Y$. Suppose $\phi$ is the monodromy of $K$ and $c(\phi)$ is the fractional Dehn twist coefficient of $\phi$. Then we define
\[
  n_K := \begin{cases} 3 + \lceil c(\phi) \rceil &\text{if } \phi \text{ is pseudo-Anosov,}\\ 4 + \lfloor c(\phi) \rfloor &\text{otherwise.}\end{cases}
\]
Now we are ready to state the second main theorem.

\begin{theorem} \label{thm:non-Liouville}
  Let $Y$ be a rational homology $3$--sphere and $K$ be a genus one fibered knot in $Y$. Then $Y_r(K)$ admits a strongly fillable contact structure which is not Liouville fillable for any $r \in \mathcal{R}(1/n_K)$.
\end{theorem}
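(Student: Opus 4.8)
The plan is to exhibit the required contact structure $\xi_{(r)}$ on $Y_r(K)$ through two different contact surgery presentations: one that makes Liouville non-fillability visible via Theorem~\ref{thm:mixed}, and one that exhibits a strong filling. The genus one fibered knot $K$ equips $Y$ with an open book whose page $\Sigma$ is a once-punctured torus and whose monodromy is $\phi$; for an integer $m\ge 1$ write $(Y_m,\xi_m)$ for the contact manifold supported by $(\Sigma,\phi\,\delta^{-m})$, where $\delta$ is the boundary Dehn twist. This is the result of a surgery on $K$ that adds $m$ negative boundary twists to the monodromy, and the fractional Dehn twist coefficient satisfies $c(\phi\,\delta^{-m})=c(\phi)-m$. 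Let $m_0$ be the least $m$ for which $\phi\,\delta^{-m}$ fails to be right-veering, so that $\xi_{m_0}$ is overtwisted by the Honda--Kazez--Mati\'c criterion, hence not Liouville fillable. Since a pseudo-Anosov monodromy already fails to be right-veering once $c\le 0$, one gets $m_0=\lceil c(\phi)\rceil$ in that case, whereas a periodic or reducible monodromy needs $c<0$ and hence one further unit of boundary twisting, $m_0=\lfloor c(\phi)\rfloor+1$. In both cases $n_K=3+m_0$, which is the origin of the two clauses in the definition of $n_K$ and of the constants $3$ and $4$.

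For the non-Liouville half I would take a Legendrian realization $L$ of the binding lying on a page of $(\Sigma,\phi\,\delta^{-m_0})$, stabilize it once with each sign so that it becomes a mixed Legendrian knot, and stabilize further as dictated by the continued fraction expansion of $r$. Running the contact surgery calculus of Ding--Geiges, a single negative-coefficient contact surgery on $L$, absorbing the auxiliary contact $(-1)$--surgeries on Legendrian push-offs of $L$, produces $(Y_r(K),\xi_{(r)})$ for topological coefficient $r$; the bookkeeping closes up exactly when $r\in\mathcal{R}(1/n_K)$, the bound $1/n_K$ being precisely what is needed so that enough boundary twisting is available to reach the overtwisted range $m=m_0$ while the surgery on $L$ stays negative. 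Since $\xi_{m_0}$ is not Liouville fillable and $L$ is mixed, Theorem~\ref{thm:mixed} gives that $\xi_{(r)}$ is not Liouville fillable. The negativity of the surgery cannot be relaxed: by Remark~\ref{rmk:overtwisted} a non-negative contact surgery on a mixed knot is automatically overtwisted, which is exactly why the overtwisting must be arranged \emph{before} the mixed-knot surgery rather than be caused by it.

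For the strong-fillability half I would instead give $(Y_r(K),\xi_{(r)})$ a second description, as the result of a negative contact surgery on a Legendrian link in a weakly fillable contact manifold built from the positive part of a factorization of $\phi$ adjusted by $r$ — equivalently, as the boundary of a symplectic $4$--manifold obtained by attaching a $2$--handle to a weak filling along a Legendrian realization of $K$ with a framing that, although not of Weinstein type since $r>0$, still admits a symplectic extension once $r$ is small relative to $c(\phi)$. This produces a weak filling of $\xi_{(r)}$, and since $Y_r(K)$ is a rational homology sphere for $r\ne 0$, a standard deformation argument (Eliashberg, Ohta--Ono) upgrades it to a genuine strong filling. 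The main obstacle is reconciling the two presentations for \emph{every} rational $r\in\mathcal{R}(1/n_K)$, not merely for $r$ of the form $1/n$, and uniformly across the pseudo-Anosov and non-pseudo-Anosov cases: the presentation forcing Liouville non-fillability requires an overtwisted ambient manifold and a negative surgery on a mixed knot, while strong fillability of the outcome must be kept under control at the same time, and it is the precise shape of $n_K$ and of the set $\mathcal{R}$ that makes both demands compatible.
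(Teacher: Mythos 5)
There is a genuine gap, and it is the one you flag yourself at the end: your argument produces two different surgery presentations --- one in the overtwisted manifold supported by $(\Sigma,\phi\,\delta^{-m_0})$ to kill Liouville fillability, and a separate one to exhibit a weak filling --- and you never show they yield the \emph{same} contact structure on $Y_r(K)$. Without that identification the proof does not close. Worse, the first construction is itself in danger: your ambient $(Y_{m_0},\xi_{m_0})$ is overtwisted, and a doubly stabilized Legendrian realization of the binding there has no reason to be non-loose; if it is loose, an overtwisted disk survives in its complement and \emph{every} contact surgery on it is overtwisted, so that branch would never produce a fillable structure at all. Theorem~\ref{thm:mixed} only tells you the output is not Liouville fillable; it gives you nothing positive, so the ambient manifold must be chosen to carry the positive fillability information as well.

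The paper's proof avoids the bifurcation entirely by choosing a single ambient manifold that is simultaneously weakly fillable and not strongly fillable: the rotative torus bundle $(M_{\phi_c},\xi^1_{\phi_c})$ obtained by admissible transverse $0$--surgery on $K$ (Proposition~\ref{prop:measure-slope} supplies the contact structure $\xi_1$ on $Y$ and the neighborhood $N$ with dividing slope $1/\lceil c(\phi)+1\rceil$, resp.\ $1/\lfloor c(\phi)+2\rfloor$, making this possible). By Theorem~\ref{thm:torus-fillability} this torus bundle is weakly but not strongly fillable, so a single negative contact surgery on the mixed knot $L=S_+S_-(L')$, $L'$ the core of $N$, yields a contact structure that is weakly fillable (negative contact surgery preserves weak fillability, upgraded to strong by Ohta--Ono since $Y_r(K)$ is a rational homology sphere) and not Liouville fillable (Theorem~\ref{thm:mixed}, since the torus bundle is not even strongly fillable). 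The constants $3$ and $4$ in $n_K$ then fall out of the framing bookkeeping of Proposition~\ref{prop:measure-slope} and the two stabilizations, not from a right-veering threshold. Your heuristic $n_K=3+m_0$ happens to reproduce the formula, but the overtwistedness of $(\Sigma,\phi\,\delta^{-m_0})$ is not the mechanism; the mechanism is the Giroux torsion of the rotative torus bundle.
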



We can apply the theorem to the knots in $S^3$ and find infinite families of strongly fillable contact $3$--manifolds without Liouville fillings.

\begin{example} \label{ex:trefoil}
  Let $T_{2,3}$ be the right-handed trefoil. Ghiggini \cite{Ghiggini:nonfillable} showed $S^3_{1/n}(T_{2,3})$ admits a strongly fillable contact structure without Liouville fillings for any integer $n \geq 3$. Applying Theorem~\ref{thm:non-Liouville}, we can show $S^3_r(T_{2,3})$ admits a strongly fillable contact structure without Liouville fillings for any rational number $0<r<1/4$ (notice that $\phi$ is periodic and $c(\phi) = 1/6$). 
\end{example}

\begin{example}
  Let $T_{2,-3}$ be the left-handed trefoil. Tosun \cite{Tosun:classification} showed that $S^3_{1/n}(T_{2,-3})$ admits a strongly fillable contact structure without Liouville fillings for any integer $n\geq2$. Applying Theorem~\ref{thm:non-Liouville}, we can show $S^3_r(T_{2,-3})$ admits a strongly fillable contact structure without Liouville fillings for any rational number $0<r<1/3$ (notice that $\phi$ is periodic and $c(\phi) = -1/6$). 
\end{example}

\begin{example} \label{ex:f8}
  Let $K$ be the figure-eight knot. Applying Theorem~\ref{thm:non-Liouville}, we can show an infinite family of hyperbolic homology spheres $S^3_r(K)$ admits a strongly fillable contact structure without Liouville fillings for any rational number $0 < r < 1/3$ (notice that $\phi$ is pseudo-Anosov and $c(\phi)=0$).
\end{example}

For general closed $3$--manifolds, we can still find weakly fillable contact structures without Liouville fillings, but they are not strongly fillable in general. 

\begin{theorem} \label{thm:weak-non-Liouville}
  Let $Y$ be a closed oriented $3$--manifold and $K$ be a genus one fibered knot in $Y$. Then $Y_r(K)$ admits a weakly fillable contact structure which is not Liouville fillable for any $r \in \mathcal{R}(1/n_K)$.
\end{theorem}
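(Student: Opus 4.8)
The plan is to adapt the proof of Theorem~\ref{thm:non-Liouville}, whose argument already produces, for each $r \in \mathcal{R}(1/n_K)$, a contact structure $\xi_r$ on $Y_r(K)$ that is not Liouville fillable but is weakly fillable — and, when $Y$ is a rational homology sphere, strongly fillable. The guiding observation is that the rational homology sphere hypothesis is used in exactly one place, namely to upgrade a weak filling of $\xi_r$ to a strong one, and that every other ingredient is insensitive to the global topology of $Y$. So I would first construct $\xi_r$ exactly as there: writing the monodromy $\phi$ of $K$ as a word in the two positive Dehn twists of the once-punctured torus up to boundary twists, I would realize $Y_r(K)$ for $r \in \mathcal{R}(1/n_K)$ as a contact surgery along a Legendrian curve carried by a (stabilization of the) page — a construction local to a neighborhood of the open book.

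Second, I would reprove non-Liouville fillability, which is unchanged. The constraint $r < 1/n_K$ together with the value of $n_K$, read off from the fractional Dehn twist coefficient $c(\phi)$, is exactly what forces the Legendrian curve above to be a mixed Legendrian knot $L = S_+ S_-(L')$ sitting inside an auxiliary contact manifold $(Y_0,\xi_0)$ which is not Liouville fillable — indeed it can be arranged to be overtwisted, or at least to contain planar torsion — with the relevant contact surgery on $L$ carrying $\xi_r$ (compare Remark~\ref{rmk:overtwisted} and Corollary~\ref{cor:planar-torsion}). Theorem~\ref{thm:mixed} then gives at once that $\xi_r$ is not Liouville fillable, for any closed oriented $Y$.

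Third comes weak fillability, the step that consumed the rational homology sphere hypothesis in Theorem~\ref{thm:non-Liouville}. There one first builds a weak filling of $\xi_r$ — from a weak symplectic cobordism placed over a model fillable contact manifold, or by capping off the superfluous binding components of the natural open book supporting $\xi_r$ — and then, using that a genus one fibered knot bounds its page and is therefore null-homologous, so that $Y_r(K)$ is again a rational homology sphere for $r \ne 0$, deforms this weak filling near its boundary into a strong filling; the obstruction to the deformation lies in $H^2(Y_r(K);\R)$, which then vanishes. For a general closed oriented $Y$ this deformation is no longer available, since $Y_r(K)$ may have positive first Betti number, but the weak filling itself refers only to the open book and not to the homology of $Y$, so I would simply keep it. This gives a weakly fillable, non-Liouville-fillable $\xi_r$ on $Y_r(K)$ for every closed oriented $Y$.

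The hard part will be the bookkeeping needed to certify that the rational homology sphere hypothesis entered only through this weak-to-strong promotion: that the mixed Legendrian knot and its ambient non-fillable contact manifold depend only on the mapping class $\phi$ of the once-punctured torus and on $c(\phi)$, that the surgery description of $Y_r(K)$ through the modified open book is valid over any $Y$, and — if the weak filling is built by capping off — that the resulting capped-off contact manifold (of torus-bundle or connected-sum-of-$S^1 \times S^2$ type) stays weakly fillable for general $Y$, which would rest on the classification of weakly fillable contact structures on such manifolds. I expect the weak-filling construction, rather than the non-Liouville obstruction, to demand the most care.
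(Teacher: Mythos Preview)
Your strategy is correct and matches the paper exactly: the proof of Theorem~\ref{thm:non-Liouville} already builds, for each $r \in \mathcal{R}(1/n_K)$, a weakly fillable, non-Liouville-fillable contact structure on $Y_r(K)$, and the rational homology sphere hypothesis enters only in the final line, to promote weak to strong via Ohta--Ono. The paper's proof of Theorem~\ref{thm:weak-non-Liouville} is accordingly one sentence long.

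That said, your description of the auxiliary manifold $(Y_0,\xi_0)$ is slightly muddled in a way that matters. In the actual argument this manifold is the rotative torus bundle $(M_{\phi_c},\xi^1_{\phi_c})$, which is tight and weakly fillable but not strongly fillable (Theorem~\ref{thm:torus-fillability}); both features are essential, since weak fillability of $\xi_r$ is inherited from the weak filling of the torus bundle through the Weinstein cobordism given by the negative contact surgery on $L$, while non-Liouville-fillability is what feeds into Theorem~\ref{thm:mixed}. Your suggestion that $(Y_0,\xi_0)$ ``can be arranged to be overtwisted'' would break the first step --- an overtwisted manifold has no weak filling to build on. Your ``planar torsion'' alternative is fine (Giroux torsion is planar $1$-torsion), but that is a property of the rotative torus bundle already in play, not a separate construction. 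Similarly, the worries in your last paragraph and the ``capping off'' alternative are unnecessary: $M_{\phi_c}$ and its weak filling depend only on the capped-off monodromy $\phi_c$, not on $Y$, so no extra bookkeeping is required for general $Y$.
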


The main idea of the proof of Theorem~\ref{thm:mixed} is following. Menke \cite{Menke:decomposition} showed that we can decompose a Liouville filling along a properly embedded solid torus bounded by a \dfn{mixed torus} (See Section~\ref{sec:Menke} for the definition). Suppose a result of contact surgery on a mixed Legendrian knot $L$ is Liouville fillable. There exists a mixed torus associated to $L$ in $Y$ and it has a \dfn{mixed neighborhood} in $Y$. We show that this neighborhood is still contained in a result of contact surgery on $L$ and because of this neighborhood, we can show that the only possible decomposition of the Liouville filling along the mixed torus is a union of a Liouville filling of $(Y,\xi)$ and a filling of some lens space. By the assumption that $(Y,\xi)$ is not Liouville fillable however, we obtain a contradiction. 

After that, we apply Theorem~\ref{thm:mixed} to prove Theorem~\ref{thm:non-Liouville}. If we perform 0-surgery on $K$ then we obtain a torus bundle over $S^1$. There are \dfn{rotative contact structures} on a torus bundle which are weakly fillable but not strongly fillable. Choose one of them and in this contact structure, we find a mixed Legendrian knot on which negative contact surgery produces a rational homology $3$--sphere. Since negative contact surgery preserves weak fillability, the resulting contact structure is also weakly fillable but also strongly fillable since the second cohomology with $\mathbb{Q}$-coefficient of the manifold is trivial. However, by Theorem~\ref{thm:mixed}, the contact structure does not admit Liouville fillings. Finally, we keep track of the surgery coefficient with respect to the Seifert framing of $K$ using its fractional Dehn twist coefficient. 

\subsection{Contact structures without weak fillings} \label{sec:non-weak}
Theorem~\ref{thm:mixed} tells us that mixed Legendrian knots also preserve weak non-fillability under contact surgery. Using this, we can find various tight contact $3$--manifolds without weak fillings, which were not accessible previously.

\begin{example}\label{ex:not-weak-torus}
  There are tight contact structures on some Seifert fibered spaces without weak fillings, see \cite{EH:nofilling, GLS:noweak, Matkovic:weak, PVHM:weak} for examples. Let $L$ be a mixed Legendrian knot in one of these contact manifolds. Then negative contact $(r)$-surgery on $L$ for any $r < 0$ yields tight contact structures without weak fillings by Theorem~\ref{thm:mixed}.
\end{example}


\begin{example}\label{ex:not-weak-f8}
  Let $K$ be the figure-eight knot. In \cite{CM:figure8}, Conway and the author showed that $S^3_{\pm4}(K)$ admit tight contact structures $\xi_\pm$ containing separating Giroux torsion. By Ghiggini and Honda \cite{GH:torsion}, they are not weakly fillable. \cite[Proposition~3.11 and~3.12]{CM:figure8} implies that $S^3_r(K)$ can be obtained by negative contact surgery on some mixed Legendrian knots in $\xi_\pm$ if $r \in (-4,-15/4) \cup (4,17/4)$. According to Theorem~\ref{thm:mixed}, these hyperbolic rational homology spheres admit tight contact structures without weak fillings. 
\end{example}

\begin{remark}
  In \cite[Theorem~1.4]{CM:figure8}, Conway and the author showed that every tight contact structure on $S^3_r(K)$ for $r \notin (-4,-3) \cup (0,1) \cup (4,5)$ is strongly fillable. Thus $S^3_r(K)$ can admit non-fillable tight contact structure only if $r \in (-4,-3) \cup (0,1) \cup (4,5)$.
\end{remark}


\subsection{Contact structures on \texorpdfstring{$-\Sigma(2,3,6n\pm1)$}{-Sigma(2,3,6n+1)}} \label{sec:Brieskorn}
Ghiggini and Van-Horn-Morris \cite{GVHM:classification} classified tight contact structures on $-\Sigma(2,3,6n-1)$ for $n \geq 2$. They showed there exist ${n(n-1)}/{2}$ tight contact structures
\[
  \left\{\eta^n_{i,j} :\,\, \begin{aligned} &0 \leq i \leq n-2\\ &|j| \leq n - i - 2,\,\, j\equiv n-i \modp2 \end{aligned}  \right\}
\]
and they are all strongly fillable. See below for $n=5$. They also showed that $\eta^n_{0,j}$ are Stein fillable for $|j| \leq n-2$. However, Ghiggini \cite{Ghiggini:nonfillable} showed $\eta^n_{n-2,0}$ are not Liouville fillable for $n\geq3$.

\begin{center}
\begin{tabular}{rccccccc}
  &    &    &    &  $\eta^5_{3,0}$\\\noalign{\smallskip\smallskip}
  &    &    &  $\eta^5_{2,-1}$ &    &  $\eta^5_{2,1}$\\\noalign{\smallskip\smallskip}
  &    &  $\eta^5_{1,-2}$ &    &  $\eta^5_{1,0}$ &    &  $\eta^5_{1,2}$\\\noalign{\smallskip\smallskip}
  &  $\eta^5_{0,-3}$ &    &  $\eta^5_{0,-1}$ &    &  $\eta^5_{0,1}$ &    &  $\eta^5_{0,3}$\\\noalign{\smallskip\smallskip}
\end{tabular}
\end{center}

  Tosun \cite{Tosun:classification} classified tight contact structures on $-\Sigma(2,3,6n+1)$ for $n\geq1$. He showed there exist $n(n+1)/2$ tight contact structures
\[
  \left\{\xi^n_{i,j} :\,\, \begin{aligned} &0 \leq i \leq n-1\\ &|j| \leq n - i - 1,\,\, j\not\equiv n-i \modp2 \end{aligned}  \right\}
\]
and they are all strongly fillable. See below for $n=4$. He also showed that $\xi^n_{0,j}$ are Stein fillable for $|j| \leq n-1$. By applying the same argument of Ghiggini~\cite{Ghiggini:nonfillable}, he showed $\xi^n_{i,0}$ are not Liouville fillable for $1 \leq i \leq n-1$ and $n \geq 2$.

\begin{center}
\begin{tabular}{rccccccc}
  &    &    &    &  $\xi^4_{3,0}$\\\noalign{\smallskip\smallskip}
  &    &    &  $\xi^4_{2,-1}$ &    &  $\xi^4_{2,1}$\\\noalign{\smallskip\smallskip}
  &    &  $\xi^4_{1,-2}$ &    &  $\xi^4_{1,0}$ &    &  $\xi^4_{1,2}$\\\noalign{\smallskip\smallskip}
  &  $\xi^4_{0,-3}$ &    &  $\xi^4_{0,-1}$ &    &  $\xi^4_{0,1}$ &    &  $\xi^4_{0,3}$\\\noalign{\smallskip\smallskip}
\end{tabular}
\end{center}

Ghiggini and Van-Horn-Morris \cite[Conjecture~1.2]{GVHM:classification} conjectured that $\eta^n_{i,j}$ are not Liouville fillable for $i>0$.
We partially answer this conjecture by showing the contact structures on the `inner triangle' are not Liouville fillable. 

\begin{theorem}\label{thm:triangle} For any $n > 3$, we have
  \begin{enumerate}
    \item $\eta^n_{i,j}$ are strongly fillable but not Liouville fillable for $0 < i < n-3$ and $|j| < n - i -2$.
    \item $\xi^n_{i,j}$ are strongly fillable but not Liouville fillable for $0 < i < n-2$ and $|j| < n - i - 1$.
  \end{enumerate}
\end{theorem}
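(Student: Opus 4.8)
The plan is to deduce both statements from Theorem~\ref{thm:mixed} together with the surgery presentations of these contact structures, using the cases established by Ghiggini and Tosun as base cases. Strong fillability is not new here: every $\eta^n_{i,j}$ and $\xi^n_{i,j}$ was already shown to be strongly fillable in \cite{GVHM:classification} and \cite{Tosun:classification}, so only non--Liouville fillability must be proved.

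Recall that $-\Sigma(2,3,6n-1)$ and $-\Sigma(2,3,6n+1)$ are $1/n$--surgeries on a trefoil, and that \cite{GVHM:classification,Tosun:classification} present each $\eta^n_{i,j}$ (resp.\ $\xi^n_{i,j}$) by a contact surgery diagram built from these surgery descriptions, in which the index $i$ is a ``homological'' quantity (ultimately detected by the Heegaard Floer contact invariant) and $j$ is the signed number of stabilizations carried by the Legendrian knots of the diagram --- the bottom row $i=0$ consisting of the Stein fillable examples. I would rewrite this diagram so that \emph{one} Legendrian knot $L$ in it carries at least one positive \emph{and} at least one negative stabilization, while contact surgery on the rest of the diagram produces a contact structure that is already known to be non--Liouville fillable. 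Concretely, for a target $\eta^n_{i,j}$ with $0<i<n-3$ and $|j|<n-i-2$, take $L$ to be the core of the surgery solid torus inside $(-\Sigma(2,3,6m-1),\eta^{m}_{m-2,0})$ with $m=i+2$, i.e.\ inside Ghiggini's apex, which is not Liouville fillable for $m\ge 3$ by \cite{Ghiggini:nonfillable}; realize $L$ as a Legendrian knot with an appropriate $\tb$, stabilized $p$ times positively and $q$ times negatively, so that a contact surgery on $L$ yields $\eta^n_{i,j}$ with $p-q=j$ and with $p+q$ governed by the passage from $1/m$-- to $1/n$--surgery on the trefoil. The point is that the inner--triangle inequalities $0<i<n-3$, $|j|<n-i-2$, together with the parity constraint $j\equiv n-i\pmod 2$, are exactly the range in which one is forced to have $p\ge 1$ and $q\ge 1$ --- that is, in which $L$ is a \emph{mixed} Legendrian knot. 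The $\xi^n_{i,j}$ case is identical with $m=i+1$, Tosun's apex $\xi^{i+1}_{i,0}$ (not Liouville fillable for $m\ge 2$ by \cite{Tosun:classification}), and the bounds $0<i<n-2$, $|j|<n-i-1$.

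Granting this reorganization, the conclusion is immediate: $L$ is a mixed Legendrian knot in a contact $3$--manifold that is not Liouville fillable, so Theorem~\ref{thm:mixed} shows that the result of contact surgery on $L$ --- which is $\eta^n_{i,j}$ (resp.\ $\xi^n_{i,j}$) --- is not Liouville fillable. Combined with the strong fillability from \cite{GVHM:classification,Tosun:classification}, this gives both clauses.

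The technical heart, and the step I expect to be the main obstacle, is the bookkeeping in the reorganization: one has to check that the rewritten diagram genuinely presents $\eta^n_{i,j}$ (resp.\ $\xi^n_{i,j}$) rather than some other tight contact structure, that the index $i$ is inherited unchanged from the base while $j$ turns into the signed stabilization count, and that $\tb(L)$ and $\rot(L)$ --- hence the number of stabilizations of each sign --- match the prescribed data $(n,i,j)$. This amounts to following $\tb$, $\rot$, and the invariants used in \cite{GVHM:classification,Tosun:classification} through the Rolfsen twist relating $1/m$-- and $1/n$--surgery on the trefoil, and confirming that the inequalities in the statement cut out precisely the locus where both signs of stabilization occur. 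If invoking Ghiggini's and Tosun's apices directly proves awkward, an alternative base case is the strongly fillable, non--Liouville fillable contact structure produced by Theorem~\ref{thm:non-Liouville} applied to the genus one fibered trefoil, whose surgery coefficients $1/m$ lie in $\mathcal{R}(1/4)$ (resp.\ $\mathcal{R}(1/3)$) once $m$ is large enough.
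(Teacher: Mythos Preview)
Your strategy is sound in spirit but takes a needlessly circuitous route, and the paper's proof is substantially simpler because it chooses a different base manifold. You propose using Ghiggini's and Tosun's apex structures $\eta^{m}_{m-2,0}$ and $\xi^{m}_{m-1,0}$ on smaller Brieskorn spheres as the non--Liouville--fillable starting points, then finding a mixed Legendrian whose surgery reaches the target; as you yourself flag, this demands a new surgery description relating distinct Brieskorn spheres, a Legendrian realization of the surgery core with controlled $\tb$ and $\rot$, and a verification via the invariants of \cite{GVHM:classification,Tosun:classification} that the result is the intended $\eta^n_{i,j}$ or $\xi^n_{i,j}$ --- none of which you carry out.

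The paper bypasses all of this by taking as base the rotative torus bundle $(M,\eta^i)$ (resp.\ $(M,\xi^i)$), where $M$ is $0$--surgery on the right-- (resp.\ left--) handed trefoil. For $i>0$ these are weakly but not strongly fillable by Theorem~\ref{thm:torus-fillability}, hence not Liouville fillable. The point is that the needed surgery presentations already exist in the literature: \cite{GVHM:classification} and \cite{Tosun:classification} build each $\eta^n_{i,j}$ (resp.\ $\xi^n_{i,j}$) as Legendrian surgery on an explicit knot $L_{l,r}$ in $(M,\eta^i)$ (resp.\ $(M,\xi^i)$), with $j=l-r$ and $n=l+r+i+2$ (resp.\ $n=l+r+i+1$). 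The entire argument then collapses to the one--line observation that $L_{l,r}$ is mixed exactly when $l>0$ and $r>0$, which is equivalent to the inner--triangle inequalities in the statement, followed by an application of Proposition~\ref{prop:mixed}. No Rolfsen twists, no new matching of Heegaard Floer data, no realization problem for the surgery core --- the bookkeeping you anticipate as the main obstacle simply does not arise.
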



In the forthcoming paper with Etnyre and Tosun \cite{EMT:torus}, we disprove the conjecture by finding Stein fillings of $\eta^4_{1,\pm1}$. Thus the contact structures on the `edges' can be Stein fillable. We suggest a new conjecture as follows.

\begin{conjecture} 
  For any $n > 2$, we have 
  \begin{enumerate}
    \item $\eta^n_{i,\pm(n-i-2)}$ are Stein fillable for $1 \leq i \leq n-3$. 
    \item $\xi^n_{i,\pm(n-i-1)}$ are Stein fillable for $1 \leq i \leq n-2$. 
  \end{enumerate}
\end{conjecture}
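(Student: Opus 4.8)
Since the statement is a conjecture asserting that certain contact structures are \emph{Stein} fillable, the natural route is constructive: I would exhibit an explicit Weinstein filling for each edge structure and then certify that its boundary carries the named contact structure. The plan is to begin from the surgery descriptions already used in the paper, namely $-\Sigma(2,3,6n-1)=S^3_{1/n}(T_{2,3})$ and $-\Sigma(2,3,6n+1)=S^3_{1/n}(T_{2,-3})$, and to promote them to \emph{Legendrian} surgery diagrams. Concretely, I would rewrite the rational coefficient $1/n$ by a slam-dunk / continued-fraction expansion into an integral surgery link consisting of a copy of the trefoil together with a short chain of unknots, and then Legendrian realize every component in $(S^3,\xi_{\mathrm{std}})$ so that each surgery coefficient is one less than its Thurston--Bennequin invariant. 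Such a diagram is a Weinstein handlebody, so its boundary is automatically Stein fillable; the only remaining freedom is the choice of stabilizations, equivalently of rotation numbers, on each component. Throughout I would track the coefficient against the Seifert framing by means of the fractional Dehn twist coefficient of the trefoil monodromy ($c(\phi)=\pm 1/6$), exactly as in the proof of Theorem~\ref{thm:non-Liouville}, to be sure the integral diagram presents the correct manifold.

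The heart of the argument is to match the stabilization data to the parameters $(i,j)$ of Ghiggini--Van-Horn-Morris and Tosun. I expect $i$ to record the total number of stabilizations (the drop of $\tb$ below maximal) and $j$ to record the rotation number, i.e.\ the signed difference between positive and negative stabilizations. Under this dictionary the interior structures $|j|<n-i-2$ are exactly those that require stabilizations of \emph{both} signs, so that the relevant knot is mixed and Theorem~\ref{thm:mixed} obstructs Liouville fillability, whereas the edge structures $j=\pm(n-i-2)$ are the extremal ones realized by stabilizing with a \emph{single} sign. For these extremal single-sign choices the continued-fraction chain can be Legendrian realized entirely with contact $(-1)$-framings, yielding a genuine Weinstein filling. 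I would therefore (i) write down the extremal Legendrian diagram for each $(n,i)$ on the two edges, (ii) compute its $d_3$ invariant together with the induced $\spinc$ structure and Chern class data, and (iii) check these agree with the invariants attached to $\eta^n_{i,\pm(n-i-2)}$ and $\xi^n_{i,\pm(n-i-1)}$.

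To organize the general case I would argue inductively in $n$, using the base cases $\eta^4_{1,\pm1}$ (and the $\xi$ analogue) whose Stein fillings are produced in \cite{EMT:torus}: peeling off one boundary Dehn twist from the monodromy, equivalently destabilizing one unknot in the chain, should relate the edge structure at level $n$ to that at level $n-1$ through a Weinstein cobordism, propagating Stein fillability along each edge. As a conceptually cleaner alternative I would keep the open book in reserve: the genus-one page with its boundary twists gives a supporting open book, and showing that the edge monodromy admits an honest positive Dehn-twist factorization (while the interior monodromy does not) yields Stein fillability directly via the Giroux correspondence and a Lefschetz fibration.

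The main obstacle is step (iii): certifying that the explicit extremal Weinstein diagram realizes \emph{precisely} the edge member of the family and not an adjacent tight structure. The homotopy-theoretic invariants ($d_3$ and the underlying $\spinc$ structure) do not separate the conjugate pair $\pm j$ and may coincide for several members, so isolating a single $(i,j)$ requires the finer contact invariant in $\HFhat$, and computing it for these surgery diagrams uniformly in $n$ and $i$ is the delicate part. A secondary, purely combinatorial obstacle is to prove that the single-sign stabilization pattern is genuinely realizable as an all contact $(-1)$ chain for every $(n,i)$ on the edge, equivalently that the supporting open book admits a positive factorization there but none in the interior; establishing this dichotomy cleanly is what would turn the heuristic "edges are extremal hence fillable" into a proof.
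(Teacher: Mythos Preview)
The statement you are addressing is labeled a \emph{Conjecture} in the paper and is not proved there; there is no proof in the paper to compare your proposal against. The only supporting evidence the paper offers is the remark that the forthcoming work \cite{EMT:torus} constructs Stein fillings of the single base case $\eta^4_{1,\pm1}$. Your write-up is therefore not a proof attempt to be checked for correctness but a research outline, and you yourself flag its main gap: identifying which member $(i,j)$ of the classified family is realized by a given extremal Legendrian diagram.

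One concrete point where your heuristic does not line up with the paper's setup: you propose that $i$ records the number of stabilizations of a Legendrian trefoil in $(S^3,\xi_{\mathrm{std}})$. In the constructions of Ghiggini--Van-Horn-Morris and Tosun recalled in the proof of Theorem~\ref{thm:triangle}, however, $i$ is the torsion index of the rotative structure $\eta^i$ (resp.\ $\xi^i$) on the torus bundle $M$ in which the surgery knot $L_{l,r}$ lives, while the stabilization counts are $l$ and $r$ with $j=l-r$ and $n=l+r+i+2$. The edge case $j=\pm(n-i-2)$ corresponds to $l=0$ or $r=0$, i.e.\ Legendrian surgery on a \emph{non}-mixed knot inside a \emph{non}-fillable ambient manifold $(M,\eta^i)$ for $i\geq 1$; this does not immediately yield a Stein filling, so any approach via an all-$(-1)$ diagram in $(S^3,\xi_{\mathrm{std}})$ must produce the same contact structure by a genuinely different route, which is exactly the identification problem you single out as the obstacle.
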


\subsection*{Acknowledgements}
The author thanks John Baldwin for the helpful discussion about the fractional Dehn twist coefficient and Austin Christian, John Etnyre, and Michael Menke for the helpful discussions about weak fillability. The author also appreciates Lisa Piccirillo and Agniva Roy for the helpful conversations.

\section{Background}

We assume readers have a basic understanding of $3$--dimensional contact topology, in particular convex surface theory, and Legendrian and transverse knots. See \cite{Honda:classification1} and \cite{Etnyre:knots} for details. We also assume readers are familiar with open book decompositions of closed $3$--manifolds. See \cite{Etnyre:openbook} for details. 

\subsection{Genus one fibered knots and the fractional Dehn twist coefficients}
We say a knot $K$ in a closed oriented $3$--manifold $Y$ \dfn{admits a genus one open book $(\Sigma,\phi)$} if $K$ is a genus one fibered knot, $\Sigma$ is a fiber surface of $K$, and $\phi$ is the monodromy. By a result of Thurston \cite{Thurston:mcg}, $\phi$ is freely isotopic to one of three types: pseudo-Anosov, periodic and reducible. Let $a,b: \Sigma \to \Sigma$ be the monodromies that are positive Dehn twists along homologically essential closed curves on $\Sigma$, respectively, forming a basis of $H_1(\Sigma)$. Also, let $\delta: \Sigma \to \Sigma$ be the monodromy which is a positive Dehn twist along a closed curve parallel to the boundary. Baldwin \cite{Baldwin:fdtc} classified monodromies on $\Sigma$ (up to conjugation).

\begin{theorem}[Baldwin \cite{Baldwin:fdtc}]\label{thm:classification-mcg}
  Suppose $K$ admits a genus one open book $(\Sigma, \phi)$. The monodromy $\phi$ is one of the following types:
  \begin{enumerate}
    \item $\delta^na^{r_1}b^{-1} \cdots a^{r_k}$, where $r_1,\ldots,r_k \geq 0$, and at least one $r_i > 0$.
    \item $\delta^nab^2ab^2a^{r_1}b^{-1} \cdots a^{r_k}b^{-1}$, where $r_1,\ldots,r_k \geq 0$, and at least one $r_i > 0$.
    \item $\delta^na^mb^{-1}$, for $m = -1, -2, -3$. 
    \item $\delta^nab^2ab^2a^mb^{-1}$, for $m = -1, -2, -3$.
    \item $\delta^nb^m$, for $m \in \mathbb{Z}$.
    \item $\delta^nab^2ab^2b^m$, for $m \in \mathbb{Z}$.
  \end{enumerate} 
  Types (1) and (2) are pseudo-Anosov, (3) and (4) are periodic, and (5) and (6) are reducible.
\end{theorem}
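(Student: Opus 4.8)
The plan is to identify $\mathrm{MCG}(\Sigma,\bd\Sigma)$ with the $3$--strand braid group and then push the conjugacy problem down to $SL(2,\Z)$, where conjugacy is classical. First I would record the standard facts about the genus one, one--boundary surface $\Sigma$: the mapping class group fixing $\bd\Sigma$ pointwise is generated by the two twists $a,b$ subject only to the braid relation $aba=bab$, so $\mathrm{MCG}(\Sigma,\bd\Sigma)\cong B_3$; capping $\bd\Sigma$ and recording the action on $H_1\cong\Z^2$ gives a surjection $\pi\colon B_3\twoheadrightarrow SL(2,\Z)$ whose kernel is the infinite cyclic group generated by the boundary twist $\delta=(ab)^6$ (the chain relation). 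Thus $\delta$ is central, and $ab^2ab^2$ is a lift of $-I$ — in fact it equals the central element $(ab)^3$, as one sees by noting both map to $-I$ and have the same image under the abelianization $\alpha\colon B_3\to\Z$ sending each generator to $1$ (hence $\delta$ to $12$), which is injective on $\ker\pi=\langle\delta\rangle$. I would also invoke the standard dictionary (free isotopy classes on $\Sigma$ are exactly elements of $SL(2,\Z)$, with Nielsen--Thurston type detected by the matrix): $\phi$ is pseudo-Anosov, periodic, or reducible according as $\pi(\phi)$ is hyperbolic ($|\mathrm{tr}|>2$), finite order, or parabolic ($|\mathrm{tr}|=2$, $\ne\pm I$); the classes $\pm I$, at which types (5)--(6) degenerate to $\delta^n$ and $\delta^n(ab)^3$, are a boundary case.

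Next I would reduce conjugacy in $B_3$ to conjugacy in $SL(2,\Z)$. Since $\ker\pi=\langle\delta\rangle$ is central, for any conjugacy class $\mathcal C\subset SL(2,\Z)$ with a chosen word representative $w$ (so $\pi(w)\in\mathcal C$), every element of $\pi^{-1}(\mathcal C)$ is $B_3$--conjugate to $\delta^n w$ for some $n\in\Z$ — lift an $SL(2,\Z)$--conjugator of $\pi(g)$ to $\widetilde P\in B_3$, conjugate $g$ by $\widetilde P$, and note that the result has the same $\pi$--image as $w$, hence differs from $w$ by a power of $\delta$ — and $\delta^n w$ is $B_3$--conjugate to $\delta^m w$ only if $n=m$, because $\alpha(\delta^nw)=12n+\alpha(w)$ is a conjugacy invariant. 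So the conjugacy classes of $B_3$ are exactly the $\Z$--families $\{[\delta^nw]:n\in\Z\}$ as $\mathcal C$ ranges over conjugacy classes of $SL(2,\Z)$, and it remains to enumerate those with convenient representatives.

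Finally I would run the $SL(2,\Z)$ enumeration using the amalgam structure $SL(2,\Z)\cong\Z/4\ast_{\Z/2}\Z/6$ together with the trace. Torsion: every finite--order element is conjugate into a vertex group, giving exactly eight classes — $I$, $-I$, and six noncentral ones of orders $4,4,3,3,6,6$; these six are represented by $a^mb^{-1}$ for $m=-1,-2,-3$ (computing $\pi$ gives traces $1,0,-1$, i.e. orders $6,4,3$) and by their negatives $(ab^2ab^2)a^mb^{-1}$ (orders $3,4,6$), which, after prepending $\delta^n$, are types (3) and (4). Parabolics: the conjugacy classes with $\mathrm{tr}=\pm2$ are indexed by $\Z$ via $\pm\matrixs1m01$, lifting to $\delta^nb^m$ (type (5)) and $\delta^n(ab^2ab^2)b^m$ (type (6)), with $m=0$ recovering $\pm I$. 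Hyperbolics: the classical continued--fraction / Farey--graph normal form says every conjugacy class with $|\mathrm{tr}|>2$ has a representative, unique up to cyclic permutation, that is a positive word in $R=\matrixs1101$ and $L=\matrixs1011$; with $\pi(a)=R$ and $\pi(b^{-1})=L$ this becomes $\delta^n a^{r_1}b^{-1}\cdots a^{r_k}$ with $r_i\ge0$ and some $r_i>0$ when $\mathrm{tr}>0$ (type (1)), or its $(ab^2ab^2)$--multiple when $\mathrm{tr}<0$ (type (2)); the hypothesis ``some $r_i>0$'' precisely excludes the degenerate purely--parabolic words. Collating the hyperbolic, parabolic, and torsion cases and prepending $\delta^n$ yields the stated list.

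The step I expect to be the main obstacle is pinning down the hyperbolic normal form: proving that every Anosov conjugacy class is represented by a cyclically reduced positive word in $R$ and $L$, and getting right both the boundary bookkeeping (the ``some $r_i>0$'' condition, which marks where type (1) meets the parabolics) and the sign that separates types (1) and (2) via the $ab^2ab^2$ prefix. On the torsion side the subtlety is to verify that the six words of types (3)--(4) are pairwise non--conjugate and that, together with $\pm I$, they exhaust the torsion — i.e. that the two order--$4$ classes (and likewise the order--$3$ and order--$6$ pairs) do not collapse; this follows from the fact that in $SL(2,\Z)$ an element of order $3,4$ or $6$ is not conjugate to its inverse, since any matrix conjugating it to its inverse must act as complex conjugation on the imaginary quadratic ring it generates and therefore has determinant $-1$. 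Everything else is formal manipulation of the central extension $1\to\langle\delta\rangle\to B_3\xrightarrow{\pi}SL(2,\Z)\to1$ and its conjugacy--invariant abelianization.
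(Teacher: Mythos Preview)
The paper does not give its own proof of this theorem; it is stated as a result of Baldwin and simply cited. So there is nothing to compare your approach against at the level of this paper --- your proposal is a proof of a quoted background result.

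That said, your outline is essentially the standard argument and is correct. The identification $\mathrm{MCG}(\Sigma,\bd\Sigma)\cong B_3$, the central extension $1\to\langle\delta\rangle\to B_3\to SL(2,\Z)\to 1$ with $\delta=(ab)^6$, and the reduction of $B_3$--conjugacy to $SL(2,\Z)$--conjugacy via the abelianization invariant are all exactly right, as is your verification $ab^2ab^2=(ab)^3$ (both map to $-I$ and have abelianization $6$, and $\alpha$ is injective on $\ker\pi$). Your symplectic convention $\pi(a)=\matrixs1101$, $\pi(b)=\matrixs10{-1}1$ is the one that makes the traces of $a^mb^{-1}$ come out to $2+m$, matching types~(3)--(4) as periodic. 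The torsion count (eight classes) and the determinant argument that an order--$3,4,6$ element of $SL(2,\Z)$ is not conjugate to its inverse are both fine.

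The one place to be careful is exactly the step you flagged: the hyperbolic normal form. What you need is that every $SL(2,\Z)$ conjugacy class with $|\mathrm{tr}|>2$ contains a cyclically unique positive word in $R=\pi(a)$ and $L=\pi(b^{-1})$ (up to the sign handled by the $ab^2ab^2$ prefix). This is Gauss reduction of indefinite binary quadratic forms, or equivalently the cutting sequence of the geodesic axis on the Farey tessellation. Note also that Baldwin's types (1) and (2) are written with all $L$--exponents equal to $1$ and the $R$--exponents $r_i\ge 0$ allowed to vanish; you should check that your $R,L$ normal form can be rewritten in this specific shape (with the slightly different tails $\cdots a^{r_k}$ versus $\cdots a^{r_k}b^{-1}$ in types (1) and (2)), and that the condition ``some $r_i>0$'' is exactly what excludes the parabolic degenerations already covered by (5)--(6). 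None of this is difficult, but it is where the bookkeeping lives.
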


We briefly review the fractional Dehn twist coefficient which was introduced by Honda, Kazez and Mati\'c \cite{HKM:right-veering1}. To make our discussion simple, we assume a surface $\Sigma$ has genus one with one boundary component. The \dfn{fractional Dehn twist coefficient} is a map $c: \mathrm{Mod}(\Sigma,\partial\Sigma) \to \mathbb{Q}$ that measures twists of the monodromy along the boundary component.

Let $\alpha$ and $\beta$ be properly embedded arcs on $\Sigma$ having common endpoints. We perturb them to have minimal intersections. We say $\alpha$ is \dfn{to the right} of $\beta$ if the tangent vectors of $\alpha$ and $\beta$ on the endpoints form a positive basis with respect to the orientation of $\Sigma$. We also say $\alpha$ is \dfn{to the left} of $\beta$ if the tangent vectors of $\beta$ and $\alpha$ on the endpoints form a positive basis with respect to the orientation of $\Sigma$. We adopt the convention that $\alpha$ is to the right of $\alpha$ but not to the left of $\alpha$. We say a monodromy $\phi$ is \dfn{right-veering} if $\phi(\alpha)$ is to the right of $\alpha$ for any properly embedded arc $\alpha$ on $\Sigma$. If $\Sigma$ has genus one and one boundary component, the fractional Dehn twist coefficient can (mostly) tell whether the monodromy is right-veering or not.


\begin{proposition}[\cite{Baldwin:fdtc, HKM:right-veering1, HKM:invariants}]\label{prop:fdtc-veering} Suppose $K$ admits a genus one open book $(\Sigma, \phi)$. Then the followings hold.
  \begin{enumerate}
    \item If $\phi$ is pseudo-Anosov, $\phi$ is right-veering if and only if $c(\phi) > 0$.
    \item If $\phi$ is periodic, $\phi$ is right-veering if and only if $c(\phi)\geq0$.
    \item If $\phi$ is reducible, $\phi$ is right-veering if $c(\phi) > 0$ and not right-veering if $c(\phi)<0$.
  \end{enumerate}
\end{proposition}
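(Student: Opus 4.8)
The plan is to reduce everything to two facts about the \emph{sign} of $c(\phi)$ established by Honda--Kazez--Mati\'c \cite{HKM:right-veering1, HKM:invariants}: (A) if $c(\phi)>0$ then $\phi$ is right-veering, and (B) if $c(\phi)<0$ then $\phi$ is not right-veering. (The convention that $\alpha$ lies to the right of $\alpha$ makes the identity right-veering, with $c=0$.) Granting (A)--(B), all the ``$c>0$'' and ``$c<0$'' clauses of (1)--(3) are immediate, and (3) is then exactly as strong as stated: no assertion is made at $c(\phi)=0$, and this is necessary, since for instance a power $b^m$ of a Dehn twist about a homologically essential (hence non-boundary-parallel) curve has $c=0$ but is right-veering precisely when $m\ge 0$. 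So all remaining work is the boundary value $c(\phi)=0$ for pseudo-Anosov and periodic $\phi$.

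For periodic $\phi$ I would show that $c(\phi)=0$ forces $\phi=\mathrm{id}$, which together with (A)--(B) gives (2). The soft argument: a periodic $\phi$, freely isotoped to a finite-order map of order $\ell$, satisfies $\phi^{\ell}=\delta^{k}$ in $\mathrm{Mod}(\Sigma,\partial\Sigma)$ for some $k\in\Z$ (the kernel of the map forgetting the boundary framing is generated by the boundary twist $\delta$), and since $c$ is a homogeneous quasimorphism with $c(\delta)=1$ we get $c(\phi)=k/\ell$; hence $c(\phi)=0$ means $\phi^{\ell}=\mathrm{id}$, and $\mathrm{Mod}(\Sigma,\partial\Sigma)$ is torsion-free, so $\phi=\mathrm{id}$. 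Alternatively, and more in keeping with this section, one reads it off Theorem~\ref{thm:classification-mcg}: a nontrivial periodic monodromy is conjugate to a form in item~(3) or~(4), and Baldwin's computation \cite{Baldwin:fdtc} of $c$ on those normal forms shows that its value is never an integer, hence never $0$.

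For pseudo-Anosov $\phi$ with $c(\phi)=0$ I need that $\phi$ is \emph{not} right-veering; with (A)--(B) this yields (1). I would take this from Honda--Kazez--Mati\'c's analysis \cite{HKM:right-veering1} of the boundary behavior of a pseudo-Anosov representative. Isotope $\phi$ so that it preserves the invariant singular foliations in a neighborhood of $\partial\Sigma$; the fractional Dehn twist coefficient then records the rotation of the prong/separatrix structure along the boundary. When $c(\phi)=0$ there is a separatrix running into $\partial\Sigma$ which, completed to a properly embedded arc $\gamma$, has $\phi(\gamma)$ isotopic rel endpoints to an arc that runs along $\gamma$ and then veers (weakly) to its left; thus $\phi(\gamma)$ is not to the right of $\gamma$, so $\phi$ is not right-veering. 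Equivalently, this is HKM's statement that a pseudo-Anosov map is right-veering if and only if $c(\phi)>0$.

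The main obstacle is precisely this last point: separating $c(\phi)=0$ from $c(\phi)<0$ in the pseudo-Anosov case, i.e.\ showing that right-veering already fails at the threshold value $0$ rather than only below it. The periodic and reducible cases are then bookkeeping on Theorem~\ref{thm:classification-mcg} together with the sign facts (A)--(B), the reducible case requiring nothing more. As a first step I would pin down, once and for all, the orientation conventions for $c$ and for ``to the right of'' so that they agree with \cite{Baldwin:fdtc, HKM:right-veering1}, since the entire statement reverses under a change of convention.
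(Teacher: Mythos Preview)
The paper does not give a proof of this proposition at all: it is stated with attributions to \cite{Baldwin:fdtc, HKM:right-veering1, HKM:invariants} and then used as a black box. So there is no ``paper's own proof'' to compare against; your proposal is a reconstruction of the cited literature rather than a parallel to anything in the text.

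That said, your outline is a faithful summary of how the result is obtained in those references. The reduction to the sign facts (A)--(B) from Honda--Kazez--Mati\'c is exactly right, and your handling of the boundary case $c(\phi)=0$ is the standard one: for pseudo-Anosov maps this is precisely the HKM statement that right-veering is equivalent to $c(\phi)>0$ (via the prong/separatrix analysis you sketch), and for periodic maps the torsion-freeness argument you give is correct. One small caution on the alternative route through Theorem~\ref{thm:classification-mcg}: in that classification the identity lands in type~(5) (reducible) rather than among the periodic forms, so ``nontrivial periodic implies $c$ non-integral'' needs the convention that the identity is excluded from the periodic list, which is indeed how Baldwin's list is organized. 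Your remark that no claim is made at $c(\phi)=0$ in the reducible case, and that none can be made uniformly (e.g.\ $b^m$), is also correct and worth keeping in mind when the proposition is applied later in the paper.
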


There are some simple, but useful properties about the boundary twisting monodromy $\delta$ which are used throughout the paper, which can be found in \cite{Baldwin:fdtc,KR:fdtc}

\begin{lemma}\label{lem:fdtc-addition}
  Suppose $K$ admits a genus one open book $(\Sigma, \phi)$. Then the followings hold.
  \begin{enumerate}
    \item $c(\delta^{\pm1} \circ \phi) = c(\phi) \pm 1$,
    \item $\delta = (ab^2ab^2)^2 = (ab)^6$,
    \item $\delta \circ \phi = \phi \circ \delta$.
  \end{enumerate}
\end{lemma}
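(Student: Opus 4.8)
\emph{Plan.} I would establish the three items in the order (3), (1), (2), since (3) feeds the cleanest argument for (1). For (3), write $\delta = T_\gamma$ for an essential simple closed curve $\gamma$ parallel to $\partial\Sigma$. Any homeomorphism representing a class in $\mathrm{Mod}(\Sigma,\partial\Sigma)$ fixes $\partial\Sigma$ pointwise, hence carries a collar of $\partial\Sigma$ to a collar of $\partial\Sigma$, so $\phi(\gamma)$ is freely isotopic to $\gamma$. The conjugation formula $\phi\, T_\gamma\, \phi^{-1} = T_{\phi(\gamma)}$ for Dehn twists then gives $\phi\,\delta\,\phi^{-1} = T_{\phi(\gamma)} = T_\gamma = \delta$, i.e.\ $\delta\circ\phi = \phi\circ\delta$, so $\delta$ is central in $\mathrm{Mod}(\Sigma,\partial\Sigma)$.

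For (1), I would use that the fractional Dehn twist coefficient $c\colon \mathrm{Mod}(\Sigma,\partial\Sigma)\to\mathbb{Q}$ is a homogeneous quasimorphism (Honda--Kazez--Mati\'c \cite{HKM:right-veering1}; see also Baldwin \cite{Baldwin:fdtc}), normalized so that $c(\delta)=1$ since $\delta$ is a single positive twist about the boundary. A homogeneous quasimorphism is additive on any pair of commuting elements, and $\delta$ is central by (3); hence $c(\delta^{\pm1}\circ\phi) = c(\delta^{\pm1}) + c(\phi) = \pm1 + c(\phi)$, using $c(\delta^{-1})=-c(\delta)=-1$. Alternatively one can argue straight from the definition of the FDTC: $\delta^{\pm1}\circ\phi$ is freely isotopic (forgetting the boundary) to the same Nielsen--Thurston representative as $\phi$, and the free isotopy realizing this differs from the one for $\phi$ by insertion of $\delta^{\pm1}$, which alters the count of full boundary twists by exactly $\pm1$.

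For (2), I would first record the purely algebraic identity $ab^2ab^2 = (ab)^3$ in $\mathrm{Mod}(\Sigma,\partial\Sigma)$: cancelling $ab$ on the left reduces it to $bab = aba$, the braid relation satisfied by the Dehn twists $a,b$ along the two basis curves of $\Sigma$, which intersect once. Squaring yields $(ab^2ab^2)^2 = (ab)^6$, so it remains to identify $(ab)^6$ with $\delta$. Since the union of the two basis curves is a $2$-chain whose regular neighborhood is all of $\Sigma$, a one-holed torus with boundary $\partial\Sigma$, the chain relation in mapping class groups gives $(ab)^6 = T_{\partial\Sigma} = \delta$, completing (2). The only step going beyond routine mapping-class-group bookkeeping is the behavior of the FDTC in (1); the cleanest route is via the homogeneous-quasimorphism property together with centrality of $\delta$, which I would cite rather than reprove, sketching the direct Nielsen--Thurston argument only as an alternative.
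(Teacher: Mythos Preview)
Your proof is correct. The paper does not actually prove this lemma; it simply records the three properties and cites \cite{Baldwin:fdtc,KR:fdtc} for them. Your proposal therefore goes further than the paper by supplying a self-contained argument. The route you take is the standard one: centrality of $\delta$ via the conjugation formula for Dehn twists, additivity of the homogeneous quasimorphism $c$ on commuting pairs (with $c(\delta)=1$), and the braid relation plus the $2$-chain relation for $(ab)^6 = T_{\partial\Sigma}$. One minor wording point: in (3) you say $\phi(\gamma)$ is \emph{freely} isotopic to $\gamma$, but since $\gamma$ lies in the interior you really mean they are isotopic as curves in $\Sigma$, which is exactly what is needed for $T_{\phi(\gamma)} = T_\gamma$ in $\mathrm{Mod}(\Sigma,\partial\Sigma)$.
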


There are arcs on $\Sigma$ that characterize the monodromies with the small fractional Dehn twist coefficient.

\begin{lemma} \label{lem:arcs}
  Suppose $K$ admits a genus one open book $(\Sigma, \phi)$. Then the followings holds.
  \begin{enumerate}
    \item Suppose $\phi$ is pseudo-Anosov and $\lceil c(\phi) \rceil = 0$. Then there is a properly embedded arc $\alpha$ on $\Sigma$ such that $\phi(\alpha)$ is to the left of $\alpha$, and $\delta \circ \phi (\alpha)$ is to the right of $\alpha$.  
    \item Suppose $\phi$ is not pseudo-Anosov and $\lfloor c(\phi) \rfloor = 0$. Then there is an arc $\alpha$ on $\Sigma$ such that $\phi(\alpha)$ is to the right of $\alpha$, and $\delta^{-1} \circ \phi(\alpha)$ is to the left of $\alpha$.
  \end{enumerate}
\end{lemma}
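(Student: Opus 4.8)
The plan is to read both parts off the dictionary between the fractional Dehn twist coefficient and right-veering monodromies (Proposition~\ref{prop:fdtc-veering}) together with the additivity formula $c(\delta^{\pm1}\circ\phi)=c(\phi)\pm1$ (Lemma~\ref{lem:fdtc-addition}(1)). The point is that the two arc conditions in each part say exactly that one of $\phi,\delta^{\pm1}\circ\phi$ fails to be right-veering while the other is right-veering: recall that $\psi$ fails to be right-veering precisely when there is a properly embedded arc $\alpha$ with $\psi(\alpha)$ strictly to the left of $\alpha$, while $\psi$ is right-veering precisely when $\psi(\alpha)$ is to the right of $\alpha$ (in the reflexive sense of the stated convention) for every arc $\alpha$. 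Two auxiliary remarks will be used: $\delta=T_{\partial\Sigma}$ is freely isotopic to the identity, so $\delta^{\pm1}\circ\phi$ has the same Nielsen--Thurston type as $\phi$; and a negative boundary Dehn twist sends every arc to the left, so $\delta^{-1}(\alpha)$ is to the left of $\alpha$ for every $\alpha$.

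For part~(1): $\lceil c(\phi)\rceil=0$ means $c(\phi)\in(-1,0]$. Since $\phi$ is pseudo-Anosov with $c(\phi)\le0$, Proposition~\ref{prop:fdtc-veering}(1) says $\phi$ is not right-veering, so there is an arc $\alpha$ with $\phi(\alpha)$ to the left of $\alpha$. On the other hand $\delta\circ\phi$ is also pseudo-Anosov and $c(\delta\circ\phi)=c(\phi)+1\in(0,1]$ is positive, so Proposition~\ref{prop:fdtc-veering}(1) says $\delta\circ\phi$ is right-veering; hence $\delta\circ\phi(\alpha)$ is to the right of $\alpha$ for this (indeed any) arc, and $\alpha$ is as required.

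For part~(2): $\lfloor c(\phi)\rfloor=0$ means $c(\phi)\in[0,1)$, hence $c(\delta^{-1}\circ\phi)=c(\phi)-1\in[-1,0)$ is negative. As $\delta^{-1}\circ\phi$ is periodic or reducible, Proposition~\ref{prop:fdtc-veering}(2)--(3) gives that $\delta^{-1}\circ\phi$ is not right-veering, so choose an arc $\alpha$ with $\delta^{-1}\circ\phi(\alpha)$ to the left of $\alpha$. It remains to see $\phi(\alpha)$ is to the right of $\alpha$, and it suffices that $\phi$ itself be right-veering: if $\phi$ is periodic this holds because $c(\phi)\ge0$ (Proposition~\ref{prop:fdtc-veering}(2)), and if $\phi$ is reducible with $c(\phi)>0$ it holds by Proposition~\ref{prop:fdtc-veering}(3). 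The only case not covered is $\phi$ reducible with $c(\phi)=0$; here Baldwin's classification (Theorem~\ref{thm:classification-mcg}) together with his computation of the fractional Dehn twist coefficients \cite{Baldwin:fdtc} --- under which $c(\delta^n b^m)=n$ while the type~(6) monodromies have non-integral coefficient (consistent with $c(ab^2ab^2)=\tfrac12$, forced by $(ab^2ab^2)^2=\delta$ and homogeneity of $c$) --- shows $\phi$ is conjugate to $b^m$ for some $m\in\Z$. Writing $\phi=g\,b^m\,g^{-1}$ and taking $\alpha=g(\alpha_0)$ for an essential arc $\alpha_0$ disjoint from the twisting curve of $b$, we get $\phi(\alpha)=\alpha$, which is to the right of $\alpha$ by the stated convention, while $\delta^{-1}\circ\phi(\alpha)=\delta^{-1}(\alpha)$ is to the left of $\alpha$. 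This exhausts all cases.

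The step I expect to take the most care is not the logic above but the interface with the definitions: one must verify that ``$\psi$ is not right-veering'' really yields an arc that is to the left in the precise sense used here (the sign of the tangent frame at the endpoints), matching the formulation of non-right-veering in \cite{HKM:right-veering1}, and in the reducible case $c(\phi)=0$ one should confirm the chosen arc behaves correctly at both endpoints and that $\delta^{-1}$ genuinely displaces an arc to the left. For a genus one surface with one boundary component these are routine --- it is enough to test the finitely many arcs that detect right-veering and to track the effect of $\delta^{\pm1}$ in a boundary collar --- but they should be written out.
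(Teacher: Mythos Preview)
Your proof is correct and follows essentially the same approach as the paper: both arguments use Proposition~\ref{prop:fdtc-veering} with the additivity of $c$ to find the required arc, and both fall back on Baldwin's explicit classification together with the arc fixed by $b$ to handle the reducible case. The only organizational difference is that the paper treats the reducible case uniformly (covering both $b^m$ and $ab^2ab^2b^m$ with the single $b$-fixed arc, noting $\delta^{-1}\circ\phi(\alpha)$ is then $\delta^{-1}(\alpha)$ or $(ab^2ab^2)^{-1}(\alpha)$), whereas you split off the subcase $c(\phi)>0$ and handle it via the right-veering criterion before dealing with $c(\phi)=0$ explicitly.
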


\begin{proof}
  We first start with the pseudo-Anosov case. According to Proposition~\ref{prop:fdtc-veering}, $\phi$ is not right-veering and there exists a properly embedded arc $\alpha$ on $\Sigma$ such that $\phi(\alpha)$ is to the left of $\alpha$. Since $\delta \circ \phi$ is right-veering by the assumption, $\phi(\alpha)$ is to the right of $\alpha$.
  
  Next, suppose $\phi$ is periodic. According to Proposition~\ref{prop:fdtc-veering}, $\phi$ is right-veering and $\delta^{-1} \circ \phi$ is not right-veering. Thus there exists a properly embedded arc $\alpha$ on $\Sigma$ such that $\phi(\alpha)$ is to the right of $\alpha$, and $\delta^{-1} \circ \phi(\alpha)$ is to the left of $\alpha$.  

  Now suppose $\phi$ is reducible. According to Theorem~\ref{thm:classification-mcg}, $\phi$ is one of $b^m$ and $ab^2ab^2b^m$ for $m \in \mathbb{Z}$. Let $\alpha$ be a properly embedded arc on $\Sigma$ which is fixed by $b$. Then $\phi(\alpha)$ is either $\alpha$ or $ab^2ab^2(\alpha)$, so it is to the right of $\alpha$. However $\delta^{-1} \circ \phi(\alpha)$ is either $\delta^{-1}(\alpha)$ or $(ab^2ab^2)^{-1}(\alpha)$ by Lemma~\ref{lem:fdtc-addition}, so it is to the left of $\alpha$.
\end{proof}

We finish this section by introducing a capping off operation. We can obtain a closed surface $\Sigma_c$ by gluing a disk to $\Sigma$ along the boundary, and obtain a monodromy $\phi_c$ by extending $\phi$ by the identity over the disk. We say $(\Sigma_c,\phi_c)$ is obtained from $(\Sigma,\phi)$ by \dfn{capping off} the boundary of $\Sigma$. It is clear that any two monodromies $\phi$ and $\psi$ that differ by $\delta^k$ become isotopic after capping off the boundary.

\subsection{Decomposition of Liouville and weak fillings along a torus} \label{sec:Menke}
Menke \cite{Menke:decomposition} showed a Liouville domain can be decomposed along a properly embedded solid torus whose boundary is a \dfn{mixed torus}, which we define below. After that, Christian and Menke \cite{CM:decomposition} generalized the result to a mixed surface with any positive genus. These can be considered as a generalization of the result of Eliashberg \cite{Eliashberg:decomposition} that a Weinstein domain can be decomposed along a properly embedded $B^3$ whose boundary is a convex sphere.  

First, we review the Farey graph. Given two rational numbers $a/b$ and $c/d$, we define their \dfn{Farey sum} to be 
\[
  \frac ab \oplus \frac cd = \frac{a+c}{b+d}.
\] 
 We also define their \dfn{Farey multiplication} to be
\[
  \frac ab \bigcdot \frac cd = ad - bc.
\]

Consider the Poincar\'e disk in $\R^2$. Label the point $(0,1)$ by $0=0/1$ and $(0,-1)$ by $\infty=1/0$. Take the half circle with non-negative $x$-coordinate. Pick a point in a half-way between two labeled points and label it with the Farey sum of the two points and connect it to both points by a geodesic. Iterate this process until all the positive rational numbers are a label on some point on the unit disk. Now do the same for the half circle with non-positive $x$-coordinate (for $\infty$, use the fraction $-1/0$). See Figure~\ref{fig:Farey}. 

Notice that for two rational numbers $r$ and $s$, we have $|r \bigcdot s| = 1$ if and only if there is an edge between them in the Farey graph.

\begin{figure}[htb]{\scriptsize
  \begin{overpic}[tics=20]{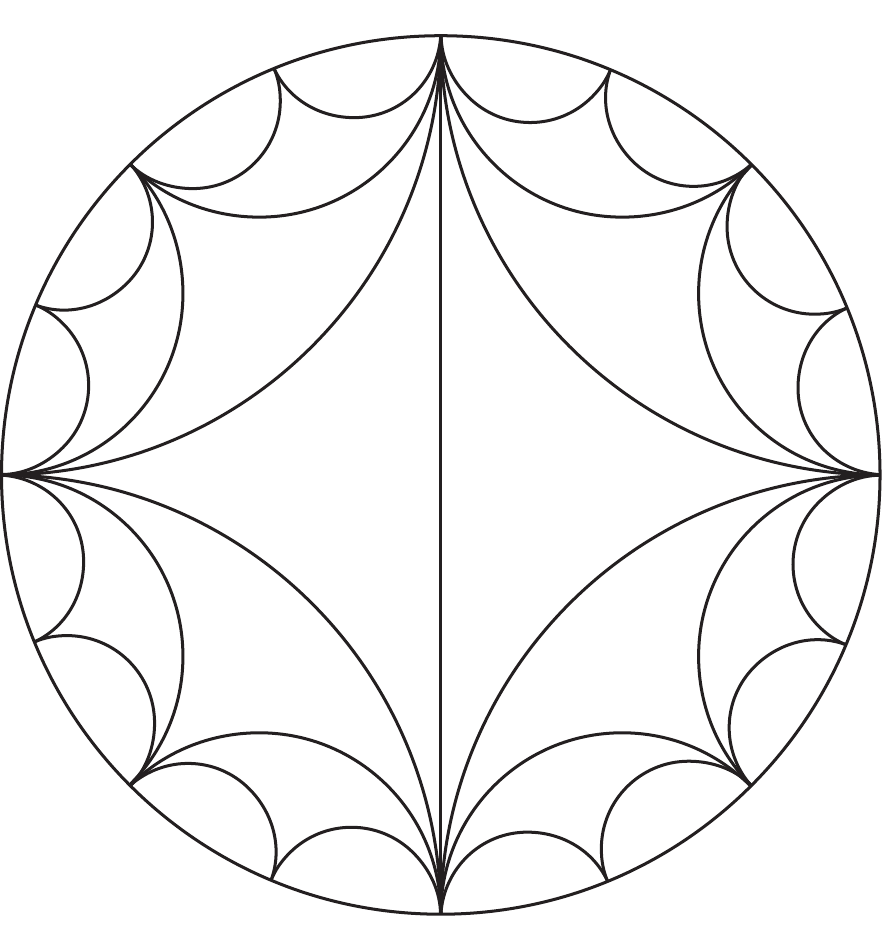}
    \put(123, 0){$\infty$}
    \put(125, 266){$0$}
    \put(-15, 132){$-1$}
    \put(257, 132){$1$}
    \put(20, 38){$-2$}
    \put(222, 38){$2$}
    \put(19, 232){$-1/2$}
    \put(218, 232){$1/2$}
    \put(60, 260){$-1/3$}
    \put(175, 260){$1/3$}
    \put(-13, 185){$-2/3$}
    \put(248, 185){$2/3$}
    \put(-17, 80){$-3/2$}
    \put(248, 80){$3/2$}
    \put(63, 8){$-3$}
    \put(175, 8){$3$}
  \end{overpic}}
  \caption{The Farey graph.}
  \label{fig:Farey}
\end{figure}

A \dfn{mixed torus} in a contact $3$--manifold $(Y,\xi)$ is a convex torus $T$ such that there exists a virtually overtwisted $T \times [-1,1]$ in $(Y,\xi)$ where $T := T\times\{0\}$ and $T\times[-1,0]$ and $T\times[0,1]$ are basic slices with different signs. Let $T_{-1} := T\times\{-1\}$ and $T_1 := T\times\{1\}$. We call this $T^2 \times [-1,1]$ the \dfn{mixed neighborhood} for $T$.  Also let $s_{-1}$, $s_0$ and $s_1$ be the dividing slopes of $T_{-1}$, $T$, and $T_1$, respectively. We require that $s_0$ is clockwise of $s_{-1}$ and anticlockwise of $s_1$ in the Farey graph. 

We say a solid torus with convex boundary is a \dfn{solid torus with lower meridian} if it has two dividing curves with meridional slope $r$ and dividing slope $s$. Also any convex torus in the solid torus parallel to the boundary has dividing slope $t$ such that $t$ is clockwise of $r$ and anticlockwise of $s$ in the Farey graph. We denote it by $S(r,s;l)$.  

We say a solid torus with convex boundary is a \dfn{solid tours with upper meridian} if it has two dividing curves with meridional slope $r$ and dividing slope $s$. Also any convex torus in the solid torus parallel to the boundary has dividing slope $t$ such that $t$ is anticlockwise of $r$ and clockwise of $s$ in the Farey graph. We denote it by $S(r,s;u)$.

Now we are ready to state Menke's result.

\begin{theorem}[Menke \cite{Menke:decomposition}] \label{thm:Menke}
  Let $(Y,\xi)$ be a closed contact $3$--manifold and $(W,\omega)$ be a Liouville (resp.~weak) filling of $(Y,\xi)$. If there exists a mixed torus $T \subset (Y,\xi)$ with a mixed neighborhood $T^2 \times [-1,1]$, then there exists a Liouville (resp.~weak) filling $(W',\omega')$ of $(Y',\xi')$ such that
  \begin{itemize}
    \item $(Y',\xi')$ is obtained by cutting $Y$ along $T$ and gluing two tight solid tori $S(s,s_0;l)$ and $S(s,s_0;u)$ to $T\times[0,1]$ and $T\times[-1,0]$ along $T$, respectively, for some $s \in \mathbb{Q}\cup\{\infty\}$ such that $s$ is anticlockwise of $s_{-1}$ and clockwise of $s_1$ in the Farey graph and $|s \bigcdot s_0| = 1$. 
    \item $(W',\omega')$ is obtained by cutting $W$ along a properly embedded solid torus $S$ with $\partial S = T$.
  \end{itemize}
\end{theorem}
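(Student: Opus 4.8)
The plan is to build the properly embedded solid torus $S \subset W$ by the method of filling by holomorphic discs adapted to the torus $T$, and then to read off the new contact boundary $(Y',\xi')$ from convex surface theory. First I would normalize the geometry near $T$: after an isotopy I may assume the mixed neighborhood $N = T^2\times[-1,1]$ sits in $(Y,\xi)$ as a standard model, foliated by convex tori parallel to $T$ whose dividing slopes sweep monotonically (in the Farey graph) from $s_{-1}$ through $s_0$ to $s_1$, and I choose a contact form on $(Y,\xi)$ which near $T$ is of Morse--Bott type with the dividing curves of slope $s_0$ realised as closed Reeb orbits. The point of the virtually overtwisted hypothesis --- two basic slices of opposite sign whose dividing slopes fold back across $s_0$ --- is that it is the contact-geometric analogue of a Lutz tube: it forces the local model near $T$ to contain a distinguished circle in $T$ bounding a Bishop-type family of $J$-holomorphic discs in a collar of $T$ inside $W$, just as an overtwisted disc in $(Y,\xi)$ spawns a Bishop family of discs in any symplectic filling. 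Making the local model and this local disc family explicit is the first step.

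Next I would globalise the Bishop family. Perturb $T$ (and a collar) so that its characteristic foliation has two lines of singularities along the dividing curves of slope $s_0$ and is otherwise linear of some slope $s$ adjacent to $s_0$; the leaves of this foliation are the boundary circles of the local Bishop discs, and $|s \bigcdot s_0| = 1$ is exactly the statement that $s$ is Farey-adjacent to $s_0$. Running an SFT-type neck-stretching argument along $T$ (equivalently, along the two boundary tori of $N$ pushed slightly into $W$) and invoking the compactness package as in Eliashberg's filling-by-discs and Wendl's work on planar torsion, the Bishop family extends to a smooth, compact moduli space, diffeomorphic to a circle, of embedded holomorphic discs whose boundaries foliate a collar of $T$ in $Y$. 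The crucial analytic input is a confinement statement: the discs cannot bubble off, cannot break into non-trivial holomorphic buildings, and cannot escape the region bounded by $N$, because the only finite-energy curves in the symplectisation pieces over $T^2\times I$ are trivial cylinders over Reeb orbits, and positivity of intersection with the pre-Lagrangian tori inside $N$ traps the disc boundaries in a collar of $T$. Taking the union of this moduli space produces a smoothly embedded solid torus $S \subset W$ with $\partial S = T$ (up to isotopy), whose meridian is the slope $s$; because all the discs live in the region modelled on $N$, the slope $s$ necessarily lies between $s_{-1}$ and $s_1$ in the Farey graph.

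Finally, I would cut $W$ open along $S$ and check that the symplectic data descends. For a weak filling one only needs $\omega'|_{\xi'} > 0$, which holds because $\omega$ was positive on $\xi$ and the cut is along a hypersurface swept out by $J$-holomorphic discs; here one also uses the Niederkr\"uger--Wendl observation that weak fillings behave well under such cutting once the relevant cohomology class on $T$ is matched, which is arranged by the choice of local model. For a Liouville filling one instead perturbs the primitive of $\omega$ near $S$ so that the Liouville vector field is outward-transverse to the new boundary pieces, using that $S$ is foliated by the discs. To identify $(Y',\xi')$: cutting $Y$ along $T$ and gluing a copy of $S$ to each of the two resulting tori glues in precisely a tight solid torus with lower (resp.\ upper) meridian of meridional slope $s$ and dividing slope $s_0$, the two cases being distinguished by the opposite signs of the two basic slices --- this is a direct convex-surface-theory computation with the slopes $s, s_0$ found above --- which is the asserted description, with $(W',\omega')$ the corresponding filling obtained by cutting $W$ along $S$. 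The main obstacle is the confinement-and-compactness step of the second paragraph: ruling out bubbling and escape of the holomorphic discs and showing that their moduli space compactifies to an honest embedded solid torus. This is where the mixed hypothesis --- opposite signs of the two basic slices together with the folded-back dividing slopes --- is used in an essential way, and it is the technical heart of the argument.
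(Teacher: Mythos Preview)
The paper does not contain a proof of this statement: Theorem~\ref{thm:Menke} is quoted as Menke's result from \cite{Menke:decomposition} and is used as a black box throughout (with the remark that the arguments there also cover the weak-filling case). There is therefore nothing in the paper to compare your proposal against.

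That said, your outline is broadly in the spirit of Menke's actual argument --- the solid torus $S$ is indeed produced by a filling-by-holomorphic-discs construction adapted to the mixed torus, and the identification of $(Y',\xi')$ is convex-surface theory. Since the paper treats this as an input rather than proving it, a proof sketch here is not what is required; you should simply cite \cite{Menke:decomposition} (and \cite{CM:decomposition} for the weak case) as the paper does.
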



\begin{remark}
  In \cite{Menke:decomposition, CM:decomposition}, the theorems were proved for Liouville and strong fillings. However, the proofs actually work for Liouville and weak fillings.
\end{remark}

\subsection{Contact and transverse surgery} \label{sec:surgery}
In this section, we review two surgery operations on contact manifolds: \dfn{contact surgery} and \dfn{transverse surgery}. 

Let $(Y, \xi)$ be a contact $3$--manifold and $L$ be a Legendrian knot in $(Y,\xi)$. A \dfn{contact surgery} on $L$ is a surgery operation to produce another contact manifold. Here we briefly review the construction. For more details, see \cite{DG:surgery}. 

First, recall that $S(r,s;l)$ is a convex solid torus with lower meridian having the meridional slope $r$ and the dividing slope $s$, see Section~\ref{sec:Menke}. Here, we use the contact framing, which is induced from the contact planes along $L$. To perform \dfn{contact $(r)$--surgery}, take a standard neighborhood $N$ of $L$. Notice that $N$ is contactomorphic to $S(\infty,0;l)$ (there exists a unique tight contact structure on $S(\infty,0;l)$). We remove $N$ from $(Y, \xi)$ and glue $S(r,0;l)$ to the boundary of $Y \setminus N$ and extend the contact structure to the entire $S(r,0;l)$ so that the restriction of the contact structure to $S(r,0;l)$ is tight. Notice that there exists a unique tight contact structure on $S(r,0;l)$ if and only if $r = 1/n$ for $n \in \mathbb{Z}$. Thus in general, contact $(r)$--surgery can produce different contact manifolds depending on the choice of a tight contact structure on $S(r,0;l)$. Contact $(-1)$--surgery is also called \dfn{Legendrian surgery}.

\begin{remark}
  We use parenthesis for contact surgery coefficients to indicate that we use the contact framing.
\end{remark}

Let $(Y, \xi)$ be a contact $3$--manifold and $K$ be a transverse knot in $(Y,\xi)$ with a fixed a framing. A {\it transverse surgery} on $K$ is a surgery operation to produce another contact manifold. Here, we briefly review two types of transverse surgeries: \dfn{admissible} and \dfn{inadmissible} transverse surgeries. For more details, see \cite{BE:transverse,Conway:transverse}.

Choose a neighborhood $N$ of $K$ with convex boundary. Suppose $N$ is a union of standard neighborhood of a Legendrian push-off of $K$ and negative basic slices. Let $s$ be the dividing slope of $\partial N$. Then for any $a < s$, there is another neighborhood of $K$ inside $N$, contactomorphic to $S_a = \{r \leq \theta_a\} \subset (S^1 \times \R^2, \xi_{\rm{rot}}=\ker(\cos r\, d\phi + r\sin r\,d\theta))$ where $0 < \theta_a < \pi$, $\tan \theta_a = -1/a$, and $K$ is identified with the $\phi$-axis. The slope of the leaves of the characteristic foliation on the torus $\{r = \theta_a\}$ is $a$.

To perform \dfn{admissible transverse $p/q$--surgery}, pick a neighborhood $S_{p/q}$ inside $N$ and remove the interior of $S_{p/q}$. Then perform a contact cut on the boundary, {\it i.e.\ }quotient the boundary by the $S^1$-action of the leaves of the characteristic foliation (see \cite{Lerman:contact-cut} for details). The resulting contact manifold after the quotient gives the result of admissible transverse $p/q$-surgery on $K$.  

To perform \dfn{inadmissible transverse $p/q$--surgery}, pick a neighborhood $S_{a}$ inside $N$ for any $a<s$ and remove the interior of $S_{a}$. Then glue the $T^2 \times I$ layer $\{\theta_{p/q} \leq r \leq \theta_a\}$ to $\partial(Y \setminus S_{a})$ along $\{r=\theta_a\}$ where $0< \theta_a - \theta_{p/q} < \pi$. Then perform a contact cut on the new boundary and obtain the result of inadmissible transverse $p/q$-surgery on $K$.

Unlike a contact surgery on a Legendrian knot, the resulting contact structure of admissible transverse surgery depends on the choice of a neighborhood of $K$. Thus in this paper, we always specify a neighborhood $N$ of a transverse knot $K$ and we say \dfn{transverse surgery on $K$ using $N$} or \dfn{transverse surgery using $N$} in short.

We finish this section by introducing a useful proposition which is used throughout the paper. 

\begin{proposition}[Conway \cite{Conway:transverse}] \label{prop:keep-track-of-slope}
  Suppose $K$ admits a genus one open book $(\Sigma,\phi)$. With respect to the page framing, $\pm1$-surgery on $K$ produces a new open book $(\Sigma, \delta^{\mp1} \circ \phi)$. 
  
  Let $\xi$ be a contact structure on $Y$ and $N$ be a neighborhood of $K$ with convex boundary of slope $p/q$. Suppose we can perform admissible transverse $-1$--surgery using $N$. After the surgery, the dividing slope of $N$ with respect to the page framing changes as follows: 
  \[
    \begin{pmatrix}
      1 & 1\\
      0 & 1
    \end{pmatrix}
    \begin{pmatrix}
      q\\p
    \end{pmatrix}
    =
    \begin{pmatrix}
      q+p\\p
    \end{pmatrix}.
  \]
  If we perform inadmissible transverse $1$--surgery using $N$, the dividing slope of $N$ with respect to the page framing changes as follows:
  \[
    \begin{pmatrix}
      1 & -1\\
      0 & 1
    \end{pmatrix}
    \begin{pmatrix}
      q\\p
    \end{pmatrix}
    =
    \begin{pmatrix}
      q-p\\p
    \end{pmatrix}.
  \]
\end{proposition}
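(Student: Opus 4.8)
The plan is to prove the smooth statement first and then obtain the two framing formulas by a change of basis on the boundary torus.

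For the smooth statement, I would invoke the standard fact that $\pm 1$--surgery on a binding component of an open book, measured with respect to the page framing, is a modification supported near the binding that twists the monodromy by $\delta^{\mp 1}$ (which side we compose on is immaterial since $\delta$ is central, by Lemma~\ref{lem:fdtc-addition}). One way to see this: capping off the boundary of $(\Sigma,\phi)$ and of $(\Sigma,\delta^{\mp 1}\circ\phi)$ gives the same torus bundle, and the two open books are recovered from it by re-drilling a neighborhood of the same curve with framings that differ by $\mp 1$, and re-drilling with a framing shifted by $\mp 1$ is precisely $\pm 1$--surgery on the binding relative to the page framing. Hence $\pm 1$--surgery on $K$ with respect to the page framing produces $(\Sigma,\delta^{\mp 1}\circ\phi)$; see also \cite{Etnyre:openbook}. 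The consequence I will use is that the core $K'$ of the surgery solid torus is again a genus one fibered knot with the same page $\Sigma$, so its page framing is defined and the page surface agrees with the old one away from a thin neighborhood of the binding.

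For the two transverse surgeries the plan has four steps. (i) Observe that both admissible transverse $-1$--surgery and inadmissible transverse $1$--surgery using $N$ are supported in the interior of $N$: the admissible one removes $S_{-1}\subset N$ and performs a contact cut on its boundary, and the inadmissible one removes some $S_a\subset N$, inserts the rotative layer $\{\theta_{1}\le r\le\theta_a\}$, and performs a contact cut; in either case the convex torus $\partial N$, together with its dividing set of slope $p/q$ in the basis $(\lambda,\mu)$ (page longitude, meridian), is untouched and persists in the surgered manifold $Y'$ as a convex torus bounding a neighborhood of the new transverse knot $K'$. (ii) Recall that admissible transverse $-1$--surgery using $N$ is smoothly $-1$--surgery on $K$, and inadmissible transverse $1$--surgery using $N$ is smoothly $+1$--surgery on $K$, both relative to the page framing (from the local model $(S^1\times\R^2,\xi_{\mathrm{rot}})$ and the description of the contact cut; cf.\ \cite{BE:transverse,Conway:transverse}). (iii) Conclude from the smooth statement that $K'$ is fibered with page $\Sigma$, so its page framing $\lambda'$ equals the old $\lambda$ as a curve on $\partial N$ (the page surface between $\partial N$ and the binding is unchanged, so $\Sigma\cap\partial N$ is the same curve), while its meridian becomes $\mu'=\mu-\lambda$ in the admissible case and $\mu'=\mu+\lambda$ in the inadmissible case. (iv) Rewrite the dividing curves $q\lambda+p\mu$ in the new basis $(\lambda',\mu')$: in the admissible case this is $(q+p)\lambda'+p\mu'$, and in the inadmissible case it is $(q-p)\lambda'+p\mu'$, which are exactly $\matrixs{1}{1}{0}{1}\vects{q}{p}=\vects{q+p}{p}$ and $\matrixs{1}{-1}{0}{1}\vects{q}{p}=\vects{q-p}{p}$.

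The main obstacle is getting the orientation conventions to line up: confirming with all signs that admissible transverse $-1$--surgery (resp.\ inadmissible transverse $1$--surgery) using $N$ is $-1$--surgery (resp.\ $+1$--surgery) with respect to the page framing — equivalently, that the resulting open books are $(\Sigma,\delta\circ\phi)$ and $(\Sigma,\delta^{-1}\circ\phi)$, compatibly with the smooth statement — so that the meridian transforms exactly as stated. This amounts to carefully reconciling the conventions in the definition of $\xi_{\mathrm{rot}}$, in the contact cut, in the page framing, and in the sign $\delta^{\mp 1}$; once those are pinned down, the two formulas reduce to the change of basis in step (iv).
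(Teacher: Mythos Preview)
The paper does not give its own proof of this proposition; it is stated as a quotation from Conway \cite{Conway:transverse} and used as a black box in the proof of Proposition~\ref{prop:measure-slope} and Theorem~\ref{thm:non-Liouville}. So there is no argument in the paper to compare against.

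Your proposal is essentially correct and is the natural way to justify the statement. The two key points are exactly the ones you isolate: (a) both surgeries are supported in the interior of $N$, so the convex torus $\partial N$ with its dividing set survives unchanged, and (b) the new page framing $\lambda'$ agrees with the old $\lambda$ on $\partial N$ because the fiber surface is unaltered outside the surgery torus, while the meridian changes to the surgery slope. Step (iv) is then a pure change of basis, and your computation $q\lambda+p\mu=(q\pm p)\lambda'+p\mu'$ with $\mu'=\mu\mp\lambda$ reproduces the two matrices. The sign check you flag as the main obstacle is genuine but routine: in the vector convention $\vects{q}{p}\leftrightarrow$ slope $p/q$ used throughout the paper (see the computations in the proof of Proposition~\ref{prop:measure-slope}), the old meridian is $\vects{0}{1}$ and the surgery slope $\mp1$ is $\vects{\mp1}{1}$, so $\mu'=\mu\mp\lambda$ and the matrices $\matrixs{1}{\pm1}{0}{1}$ fall out as you wrote.
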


\section{Rotative contact structures on torus bundles} \label{sec:torus-bundles} 

In this section, we construct a neighborhood of genus one fibered knot with certain properties. To do so, we first review rotative contact structures on torus bundles over $S^1$.

Let $M_\phi$ be a $T^2$-bundle over $S^1$ with the monodromy $\phi$. We may identify $T^2 = \mathbb{R}^2 / \mathbb{Z}^2$ and $\phi = A \in \mathrm{SL}_2(\mathbb{Z})$. Then for each $n \in \mathbb{N}\cup\{0\}$, a \dfn{rotative contact torus bundle $(M_\phi, \xi_\phi^n)$} is
\begin{gather*}
  M_{\phi} = T^2 \times \mathbb{R} / (\mathrm{x},t) \sim (A\mathrm{x},t-1),\\ 
  \xi_{\phi} = \ker (\sin f(t)\,dx + \cos f(t)\,dy),
\end{gather*}
where $\mathrm{x} = (x,y) \in T^2$ and $f:\mathbb{R} \to \mathbb{R}$ satisfying
\begin{enumerate}
  \item $f'(t) > 0$ for any $t \in \mathbb{R}$,
  \item $\xi_\phi^n$ is invariants under the action $(\mathrm{x},t) \mapsto (A\mathrm{x},t-1)$,
  \item $2n\pi < \displaystyle\sup_{t\in\mathbb{R}}(f(t+1) - f(t)) \leq 2(n+1)\pi$.
\end{enumerate}

Giroux \cite{Giroux:classification1,Giroux:classification2,Giroux:classification3} and Honda \cite{Honda:classification2} classified rotative contact structures on every torus bundle.

\begin{theorem}[Giroux \cite{Giroux:classification1,Giroux:classification2,Giroux:classification3}, Honda \cite{Honda:classification2}]
  Let $M_\phi$ be a torus bundle over $S^1$. For any $n \geq 0$, there exists unique rotative contact structure $\xi_\phi^n$ up to contactomorphism.  
\end{theorem}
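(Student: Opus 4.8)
The plan is to prove existence by an explicit construction of the defining function $f$, and uniqueness by convex surface theory together with Honda's classification of tight contact structures on $T^2 \times I$, with essentially all of the work going into the latter.

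For existence, I would build $f$ directly. Writing $A \in \mathrm{SL}_2(\mathbb{Z})$ for the monodromy, condition (2) says precisely that the line field $\ker(\sin f(t)\,dx + \cos f(t)\,dy)$ at level $t-1$ is carried to the one at level $t$ by $A$; equivalently, lifting the induced circle map of $A$ on $\mathbb{RP}^1 \cong \mathbb{R}/\pi\mathbb{Z}$ to a homeomorphism $\widetilde{A}$ of $\mathbb{R}$ (with the lift chosen to record the rotativity), one needs $f(t-1) = \widetilde{A}^{-1}\!\left(f(t)\right)$. One then analyzes $\widetilde{A}$ by the Thurston type of $\phi$: hyperbolic (two fixed points of the circle map), parabolic (one fixed point), or periodic (a genuine rotation). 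In each case it is elementary that for every $n \geq 0$ there is a strictly increasing smooth $f$ intertwining the covering action and with $f(t+1) - f(t)$ taking values in an interval contained in $(2n\pi, 2(n+1)\pi]$; this produces $(M_\phi, \xi_\phi^n)$.

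For uniqueness, let $\xi$ be any rotative contact structure of rotativity $n$ on $M_\phi$. Each fiber $T^2 \times \{t\}$ is linearly foliated with slope $-\tan f(t)$, and after perturbing a single fiber $T_0 = T^2\times\{0\}$ to be convex and applying a fiber-preserving diffeomorphism of $M_\phi$ one may normalize the dividing slope of $T_0$ (the admissible normalizations depend on the conjugacy class of $A$). Cutting $M_\phi$ along $T_0$ yields $T^2\times I$ carrying a tight contact structure $\xi'$ with convex boundary (tightness of $\xi'$ follows from tightness of the rotative $\xi$). By Honda's classification, $\xi'$ is determined by the minimal continued-fraction path in the Farey graph between the two boundary slopes together with a choice of signs on its basic-slice factorization; condition (3) forces this path to be the one that winds monotonically around the Farey graph $n$ extra times before reaching the $A$-image of the starting slope, so the only remaining ambiguity is the sign vector on the basic slices.

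The last step, and the main obstacle, is to show that regluing by the monodromy $A$ destroys the sign ambiguity. Here I would exhibit enough self-diffeomorphisms of $M_\phi$ — fiber-preserving shears that are isotopic to the identity on the base $S^1$, whose shearing integer is constrained by commutation with $A$ — which act on the cut-open $T^2\times I$ by inserting or deleting full Farey turns and, crucially, by the standard shuffle that interchanges adjacent basic slices of opposite sign (this shuffle, which is only an isotopy of $T^2\times I$ rel a smaller region, becomes a genuine contactomorphism once the two ends are identified through $A$). A case analysis on the Thurston type of $\phi$ then shows these moves act transitively on the admissible sign vectors, so all tight contact structures on $T^2\times I$ with the fixed boundary data become contactomorphic after closing up; hence $\xi$ is contactomorphic to $\xi_\phi^n$. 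I expect the bookkeeping in this final step — matching the shear diffeomorphisms to the Farey-graph combinatorics and verifying transitivity on signs, particularly in the parabolic and periodic cases and the borderline $n=0$ case — to carry essentially all of the technical difficulty.
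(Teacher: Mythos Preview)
The paper does not supply its own proof of this theorem; it is quoted as a classification result of Giroux and Honda and used as input. So there is no ``paper's proof'' to compare against beyond the cited references.

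That said, your outline is essentially Honda's strategy in \cite{Honda:classification2}: make a fiber convex, cut to $T^2\times I$, invoke the Farey-graph classification of tight contact structures there, and then show that the regluing by $A$ identifies the various sign decorations. Two comments. First, in your uniqueness argument you assert that the cut-open contact structure on $T^2\times I$ is tight ``because the rotative $\xi$ is tight''; tightness of $\xi$ is not part of the hypotheses and must itself be established (it follows, for instance, because the pullback to the universal cover $T^2\times\mathbb{R}$ embeds in the standard tight $(\mathbb{R}^3,\xi_{\mathrm{std}})$, or by Giroux's criterion). Second, your final paragraph is where the real content sits, and ``exhibit enough self-diffeomorphisms'' undersells what is needed: in Honda's argument one does not only shear, one also attaches and removes bypasses and must check that certain sign patterns glue up to \emph{overtwisted} structures (hence cannot arise), while the remaining tight ones are related by the moves you describe. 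The bookkeeping is genuinely delicate in the elliptic and parabolic cases and at $n=0$, exactly as you anticipate; as written, your sketch identifies the correct mechanism but does not yet pin down why transitivity on sign vectors holds in each case.
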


Eliashberg \cite{Eliashberg:nostrong} determined the fillability of rotative contact structures on $T^3$. After that, Gay \cite{Gay:fillability} completely determined the fillability of rotative contact structures on every torus bundle when $n \geq 1$.

\begin{theorem}[Gay \cite{Gay:fillability}, see also \cite{Eliashberg:nostrong}] \label{thm:torus-fillability}
  $\xi_\phi^n$ is weakly fillable but not strongly fillable for $n \geq 1$.
\end{theorem}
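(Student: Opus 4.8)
The plan is to establish the two assertions separately: \emph{weak fillability} by an explicit construction, and \emph{non-strong-fillability} by a genuinely four-dimensional obstruction, which is where the substance lies.

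For weak fillability I would build a weak symplectic filling $(W,\omega)$ of $(M_\phi,\xi_\phi^n)$ from the bundle structure. Since $(t_at_b)^6$ is trivial in the mapping class group of the closed torus, every $\phi\in\mathrm{SL}_2(\Z)$ is a product of \emph{positive} Dehn twists, so $M_\phi$ bounds a genus-one Lefschetz fibration $W$ over $D^2$, which carries a symplectic form $\omega_0$. One then corrects $\omega_0$ in a collar of $\partial W=M_\phi$ by adding a term $d(h(t)\,\alpha_t)$, where $\alpha_t$ is the contact form of $\xi_\phi^n$ on the torus slice at ``radius'' $t$ in the collar and $h$ is a cutoff vanishing on the core side and negative on $M_\phi$. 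A direct computation shows that this makes $\omega|_{\xi_\phi^n}$ a positive area form on $M_\phi$ precisely when the total rotation $\sup_t\bigl(f(t+1)-f(t)\bigr)$ exceeds $2\pi$, i.e.\ when $n\geq 1$---which is where the hypothesis enters---while nondegeneracy of the corrected form in the collar survives because $h\,h'\geq 0$ and $\omega_0$ already supplies a positive area transverse to the pages. By the uniqueness of rotative contact structures quoted above it suffices to exhibit \emph{some} weakly filled contact structure on $M_\phi$ with rotation number in $(2n\pi,2(n+1)\pi]$; this is Gay's construction \cite{Gay:fillability}.

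For non-strong-fillability there are two routes. The first: when $n\geq 1$, $(M_\phi,\xi_\phi^n)$ contains a Giroux torsion domain, since $\sup_t\bigl(f(t+1)-f(t)\bigr)>2\pi$ yields parameters $a<b$ in one fundamental domain of the bundle with $f(b)-f(a)=2\pi$, whereupon the sublayer $T^2\times[a,b]$, after reparametrizing $t$ and rotating the $T^2$ coordinates, is the standard torsion model $\ker\!\bigl(\sin(2\pi s)\,dx+\cos(2\pi s)\,dy\bigr)$ on $T^2\times[0,1]$ between two parallel pre-Lagrangian tori; one then invokes the theorem that positive Giroux torsion obstructs strong fillability. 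The second---Gay's actual method, and closer to the capping philosophy used elsewhere in this paper---is to build from the bundle structure an explicit \emph{concave} filling (symplectic cap) $(C,\omega_C)$ of $(M_\phi,\xi_\phi^n)$ containing a genus-one symplectic surface of positive self-intersection with $b_2^+(C)>0$; were there a strong filling $(W,\omega_W)$, then $X=W\cup_{M_\phi}C$ would be a closed symplectic $4$--manifold whose Seiberg--Witten invariants forbid such a surface once $b_2^+(X)>1$, the residual rational/ruled case being excluded via McDuff's classification together with the explicit topology of $C$. Either way no strong filling exists; on $T^3$ this is Eliashberg's theorem \cite{Eliashberg:nostrong}.

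The main obstacle is the non-strong-fillability half: it cannot be done by soft methods and must import either the Giroux-torsion obstruction or a Seiberg--Witten/Gromov argument about closed symplectic $4$--manifolds, so the real work is setting up Gay's cap (or citing the torsion obstruction) and verifying the four-dimensional contradiction, including the $b_2^+=1$ case. A secondary technical point is to run both the weak filling and the cap \emph{uniformly} in $\phi$---for an arbitrary product of Dehn twists rather than only $\phi=\mathrm{id}$.
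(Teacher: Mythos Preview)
The paper does not give a proof of this theorem at all: it is stated as a cited result of Gay (with the $T^3$ case attributed to Eliashberg), and the only additional content is the remark immediately following it, noting that $\xi_\phi^n$ for $n\geq 1$ contains Giroux torsion (planar $1$--torsion). So there is no ``paper's own proof'' to compare against; your proposal is an attempt to sketch the proofs in the cited references rather than anything the present paper undertakes.

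That said, your outline is broadly faithful to how those results are actually established. The Giroux--torsion route you describe for non-strong-fillability is exactly what the paper's remark points to, and is the cleanest way to see it from the definition of $\xi_\phi^n$: the condition $\sup_t(f(t+1)-f(t))>2\pi$ immediately yields an embedded torsion layer. Your alternative via Gay's concave cap and a Seiberg--Witten contradiction is indeed the method of \cite{Gay:fillability}. For weak fillability, the Lefschetz-fibration construction you sketch is correct in spirit; the key algebraic input is that in $\mathrm{SL}_2(\Z)\cong\mathrm{Mod}(T^2)$ every element is a positive word in Dehn twists, so the bundle bounds a symplectic Lefschetz fibration over $D^2$, which one then adjusts near the boundary. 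One minor caution: your phrasing ``precisely when the total rotation exceeds $2\pi$'' is slightly too strong---the weak-filling construction works for all $n\geq 0$ (indeed $\xi_\phi^0$ is often Stein fillable), and the hypothesis $n\geq 1$ is needed only for the non-strong-fillability half.
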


\begin{remark}
  If $n\geq1$, a rotative contact structure $\xi_\phi^n$ contains Giroux torsion, which is planar $1$-torsion.
\end{remark}

Now for any genus one fibered knot $K$ in a closed $3$--manifold $Y$, we construct a contact structure $\xi_n$ on $Y$ for $n \geq 0$, and a neighborhood $N$ of $K$ contained in $(Y,\xi_n)$ with several properties. We can consider the following proposition as a generalization of \cite[Lemma~3.5]{Ghiggini:invariants}.

\begin{proposition} \label{prop:measure-slope}
  Suppose $K$ is a genus one fibered knot in a closed oriented $3$--manifold $Y$ and $\phi$ is the monodromy of $K$. Then there exists a contact structure $\xi_n$ on $Y$ for $n \geq 0$ such that 
  \begin{enumerate}
    \item if $\phi$ is pseudo-Anosov, then there is a neighborhood $N$ of $K$ such that $\partial N$ is a convex torus with two dividing curves of slope $1/\lceil c(\phi) + n \rceil$. 
    \item if $\phi$ is not pseudo-Anosov, then there is a neighborhood $N$ such that $\partial N$ is a convex torus with two dividing curves of slope $1/\lfloor c(\phi) + n + 1 \rfloor$. 
  \end{enumerate}
  Also, if we perform an admissible transverse $0$--surgery using $N \subset (Y,\xi_n)$, then we obtain a rotative torus bundle $(M_{\phi_c}, \xi_{\phi_c}^n)$, where $\phi_c$ is obtained from $\phi$ by capping off the boundary.
\end{proposition}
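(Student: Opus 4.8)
The plan is to build the contact structure $\xi_n$ on $Y$ directly from an open book supporting it, and then exhibit the neighborhood $N$ of $K$ by locating the right convex torus inside a standard model. First I would use the fact that the genus one fibered knot $K$ with monodromy $\phi$ supports a contact structure on $Y$ via the Thurston–Winkelnkemper construction; more precisely, I would work with the open book $(\Sigma, \delta^m \circ \phi)$ for a suitable integer $m$ and let $\xi_n$ be the supported contact structure. The key point is that the binding $K$ of such an open book is a transverse knot whose standard transverse neighborhood is modeled on a piece of $(S^1 \times \R^2, \xi_{\rm rot})$, and the dividing slope of a convex torus $\partial N$ parallel to $K$, measured with respect to the page framing, is controlled by how much boundary twisting the monodromy has — i.e.\ by the fractional Dehn twist coefficient. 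By Lemma~\ref{lem:fdtc-addition}(1), $c(\delta^m \circ \phi) = c(\phi) + m$, so adjusting $m$ lets me dial the relevant slope. I would choose $m$ so that the supported contact structure is the correct one and so that a convex torus with the slope claimed in (1) or (2) appears as the boundary of a genuine neighborhood of $K$ (i.e.\ so that the relevant quantity $\lceil c(\phi)+n\rceil$ or $\lfloor c(\phi)+n+1\rfloor$ is realized). The split into the pseudo-Anosov and non-pseudo-Anosov cases mirrors exactly the ceiling/floor dichotomy already seen in Lemma~\ref{lem:arcs} and in the definition of $n_K$, and the arcs produced there are what certify that the monodromy is (not) right-veering to the appropriate order, which translates into the dividing slope of $\partial N$ sitting at the claimed Farey vertex rather than one step over.

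Next I would handle the surgery statement. Performing admissible transverse $0$--surgery on the binding $K$ of an open book $(\Sigma, \psi)$, with respect to the page framing, has the effect of capping off the binding: topologically one does $0$--surgery (page-framed) on the binding of $(\Sigma,\psi)$, which glues a disk to each page and replaces the open book by the closed fibration $(\Sigma_c, \psi_c)$; here $\Sigma$ has genus one with one boundary component, so $\Sigma_c = T^2$ and $Y_0(K)$ is the torus bundle $M_{\phi_c}$. I would invoke Proposition~\ref{prop:keep-track-of-slope} to keep track of how the dividing slope of $N$ changes under the transverse surgery and to confirm that the page framing of $K$ is the one being used; combined with the computation of the dividing slope of $\partial N$ from the first part, this pins down exactly which $T^2 \times I$ neighborhood of the fiber torus we are left with after the contact cut. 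The remaining task is to identify the resulting contact structure on $M_{\phi_c}$ as the rotative one $\xi_{\phi_c}^n$: by the Giroux–Honda classification (the theorem quoted just above), a rotative contact structure on a torus bundle is determined up to contactomorphism by the integer $n$ appearing in condition (3) of the definition, so it suffices to check that the amount of rotation $\sup_t (f(t+1)-f(t))$ of the contact structure we obtain lies in the interval $(2n\pi, 2(n+1)\pi]$. That rotation count is precisely what is being tracked by the index $n$ in $\xi_n$ and by the $n$ appearing in the slopes $1/\lceil c(\phi)+n\rceil$ and $1/\lfloor c(\phi)+n+1\rfloor$, because each increment of $n$ adds one full $2\pi$ twist to the characteristic foliation on tori parallel to $K$.

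The main obstacle I expect is the bookkeeping of framings and orientations: one has to be scrupulous about which framing ("page", "contact", "Seifert") is in play at each step, about the sign conventions for the Farey graph (clockwise vs.\ anticlockwise), and about the orientation of $\Sigma$ that enters the "to the right / to the left" conventions, since a sign error anywhere propagates into the ceiling/floor and into whether one gets $\xi_{\phi_c}^n$ or an adjacent rotative structure. Concretely, the delicate step is verifying that the convex torus of slope $1/\lceil c(\phi)+n\rceil$ (resp.\ $1/\lfloor c(\phi)+n+1\rfloor$) is not merely present somewhere in $(Y,\xi_n)$ but actually bounds a \emph{solid torus} neighborhood of $K$ on the correct side — this is where the arcs from Lemma~\ref{lem:arcs} and the classification of monodromies in Theorem~\ref{thm:classification-mcg} do the real work, controlling the rotation number of the characteristic foliation along $K$ and hence the extreme dividing slope attainable by a neighborhood of $K$. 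Once that slope is correctly identified, the transverse $0$--surgery computation and the appeal to the Giroux–Honda uniqueness theorem are comparatively routine. I would also double-check the boundary cases (e.g.\ when $c(\phi) + n$ is an integer, so that the ceiling and floor coincide, or when $n = 0$) separately, since these are exactly the places where the inequality in condition (3) of the definition of rotative contact structures is sharp.
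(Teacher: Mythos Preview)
There is a genuine gap in your construction of the family $\xi_n$. You propose to let $\xi_n$ be the Thurston--Winkelnkemper contact structure supported by $(\Sigma, \delta^m \circ \phi)$ for a suitable $m$, but for $m \neq 0$ this open book determines a different $3$--manifold (by Proposition~\ref{prop:keep-track-of-slope}, changing the boundary twist is the same as doing nontrivial surgery on the binding), not $Y$; and for $m = 0$ you obtain a single contact structure on $Y$, not a family indexed by $n$. The point of the proposition is precisely that the $\xi_n$ for $n \geq 1$ carry extra twisting near $K$ --- after admissible $0$--surgery they become $\xi_{\phi_c}^n$, which contains Giroux torsion --- and no Thurston--Winkelnkemper structure supported by a genus-one, one-boundary-component open book has this feature. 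So ``adjusting $m$ to dial the slope'' conflates two different things: the integer $\lceil c(\phi)\rceil$ (resp.\ $\lfloor c(\phi)\rfloor$), which is a feature of the pair $(Y,K)$, and the index $n$, which is a feature of the contact structure.

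The paper runs the construction in the opposite direction. It \emph{starts} from the rotative torus bundle $(M_{\phi_c}, \xi_{\phi_c}^n)$, where the family indexed by $n$ already exists, singles out a solid torus $N = D \times I/\!\sim$ around a Legendrian core, and performs \emph{inadmissible} transverse $0$--surgery with respect to the product framing. This produces an open book with page $T \setminus D \cong \Sigma$ and monodromy $A|_{T\setminus D} = \delta^m \circ \phi$ for some a priori unknown $m \in \Z$. The arcs of Lemma~\ref{lem:arcs} are then used not to read off a dividing slope, as you suggest, but to pin down $m = 0$: one chooses the perturbation of $A$ near $D$ (clockwise in the pseudo-Anosov case, anticlockwise otherwise) so that the image of the test arc $\alpha$ lands on the prescribed side, forcing the monodromy to be exactly $\phi$ and hence the underlying manifold to be $Y$. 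Only then is $\xi_n$ \emph{defined} as the resulting contact structure, and the dividing slope of $\partial N$ is computed by the coordinate change sending the product framing to the meridian of $K$. The general case $\lceil c(\phi) \rceil \neq 0$ (resp.\ $\lfloor c(\phi) \rfloor \neq 0$) is reduced to the normalized one by a further (in)admissible transverse $-1/k$--surgery, with slopes tracked via Proposition~\ref{prop:keep-track-of-slope}.
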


\begin{proof}
  We start with the pseudo-Anosov case first. Suppose $\lceil c(\phi) \rceil = 0$ and $n=0$. Let $\Sigma$ be a fiber surface of $K$. We cap off the boundary of $\Sigma$ and obtain a torus $T$ and the monodromy $\phi_c$. By Lemma~\ref{lem:arcs}, there is a properly embedded arc $\alpha$ on $\Sigma$ such that $\phi(\alpha)$ is to the left of $\alpha$ and $\delta\circ\phi(\alpha)$ is to the right of $\alpha$. Let $\alpha_c$ be a closed curve on $T$ obtained by capping off the boundary of $\Sigma$. Now we consider a rotative torus bundle $(M_{\phi_c},\xi_{\phi_c}^0)$. We can choose the identification $T = \mathbb{R}^2/\mathbb{Z}^2$, $\phi_c=A\in\mathrm{SL}_2(\mathbb{Z})$, and $f:\mathbb{R}\to\mathbb{R}$ so that the characteristic foliation of $T\times\{1\}$ has slope $0$ and the leaves of this characteristic foliation is isotopic to $\alpha_c$.

  Consider $(0,0) \in T = \mathbb{R}^2/\mathbb{Z}^2$ and let $D$ be a small disk centered at $(0,0)$. Perturb $A$ to be the identity on $D$. Here, we choose a perturbation of $A$ so that we match the characteristic foliation on $T\times\{0\}$ and $A(T\times\{1\})$ by perturbing the characteristic foliation on $T\times\{0\}$ in a clockwise way near $D$. See the bottom right drawing of Figure~\ref{fig:foliations}. Consider a solid torus $N = D \times I / (\mathrm{x},1) \sim (A\mathrm{x},0)$, which is a standard neighborhood of a Legendrian knot $L := (0,0) \times I / (0,0,1) \sim (0,0,0)$. Notice that the dividing arc on $\partial D \times I$ is $t \mapsto (e^{i\theta t}, t)$, where $0 < \theta = f(1) - f(0) \leq 2\pi$. Since we rotate the characteristic foliation on $D\times\{0\}$ in a clockwise way, the dividing slope of $\partial N$ is the same as the product framing. See Figure~\ref{fig:slope}. If $n > 0$, then the contact structure is $\xi_{\phi_c}^n$ and the dividing arc on $\partial D \times I$ is $t \mapsto (e^{i(2n\pi + \theta) t}, t)$ so the dividing slope of $N$ is $-n$ with respect to the product framing. 

  \begin{figure}[htbp]
    \vspace{0.2cm}
    \begin{overpic}[tics=20]{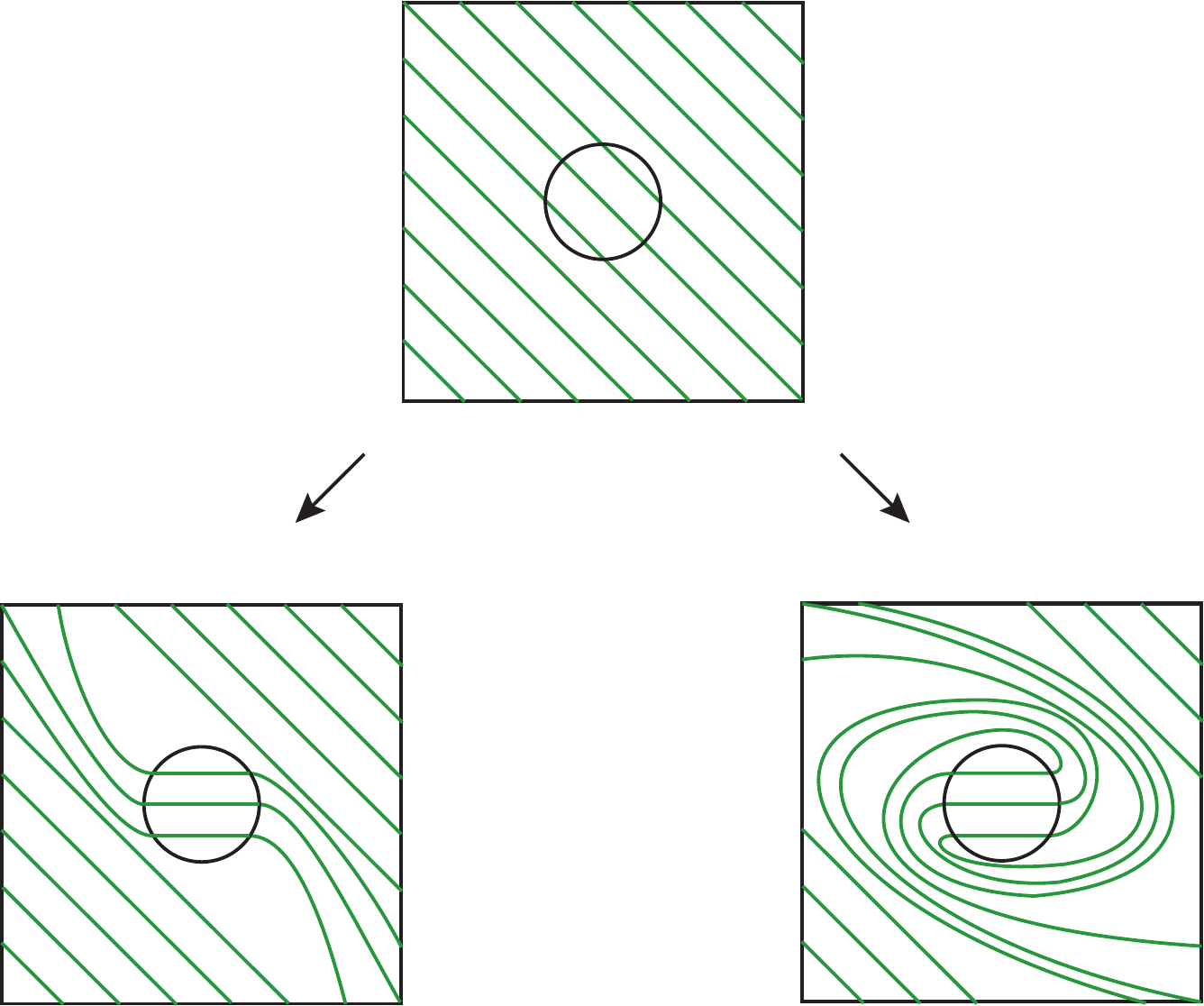}
    \end{overpic}
    \vspace{0.2cm}
    \caption{Two different ways to perturb the characteristic foliation.}
    \label{fig:foliations}
  \end{figure} 

  Choose a leaf of the characteristic foliation on $T\times\{1\}$ which passes $(0,0)$ and let $\alpha'$ be the part of the leaf restricted to $T \setminus D$. Now perform inadmissible transverse $0$-surgery using $N$ with respect to the product framing and we obtain an open book decomposition. The page of this open book decomposition is $T \setminus D$, which can be identified with $\Sigma$ so that $\alpha'$ is isotopic to $\alpha$. The monodromy of the resulting manifold is $A|_{T\setminus D}=\delta^m \circ \phi$ for some $m \in \mathbb{Z}$. Since we rotate the characteristic foliation near $D$ in a clockwise way, $\delta^m \circ \phi(\alpha)$ is to the left of $\alpha$ and $\delta^{m+1} \circ \phi(\alpha)$ is to the right of $\alpha$. We already know that $\phi(\alpha)$ is to the left of $\alpha$ and $\delta \circ \phi(\alpha)$ is to the right of $\alpha$, so $m = 0$. Thus the resulting manifold should be $Y$ and the binding of the open book is $K$. Let $\xi_n$ be the resulting contact structure. Since the product framing becomes a meridional slope of $K$, we can keep track of the dividing slope of $N$ as follows:  
  \[
    \begin{pmatrix}
      0 & -1\\
      1 & 0
    \end{pmatrix}
    \begin{pmatrix}
      1\\
      -n
    \end{pmatrix}  
    = 
    \begin{pmatrix}
      n\\
      1
    \end{pmatrix},
  \] 
  so the dividing slope of $N \subset (Y,\xi_n)$ is $1/n$. If we perform admissible transverse $0$-surgery using $N$ with respect to the page framing, we recover the rotative torus bundle $(M_{\phi_c},\xi_{\phi_c}^n)$. Now assume $\lceil c(\phi) \rceil \neq 0$. Then there exists $\psi$ such that $\lceil c(\psi) \rceil = 0$ and $\phi = \delta^k \circ \psi$ for $k = \lceil c(\phi) \rceil$. By the same argument as above, there are contact structures $\xi'_n$ for $n \geq 0$ on $(\Sigma,\psi)$ such that there is a neighborhood $N$ of the binding in $\xi'_n$ with the dividing slope $1/n$. If $k > 0$, perform admissible transverse $-1/k$--surgery using $N$ with respect to the page framing. Then by Proposition~\ref{prop:keep-track-of-slope}, the monodromy becomes $\delta^k \circ \psi = \phi$ so the resulting manifold is $Y$. Denote the resulting contact structure by $\xi_n$. By Proposition~\ref{prop:keep-track-of-slope}, the dividing slope of $N$ becomes
  \[
    \begin{pmatrix}
      1 & 1\\
      0 & 1
    \end{pmatrix}^k
    \begin{pmatrix}
      n\\
      1
    \end{pmatrix}  
    = 
    \begin{pmatrix}
      k+n\\
      1
    \end{pmatrix}.
  \]
  Thus the dividing slope of $N \subset (Y,\xi_n)$ is $1/(k+n) = 1/\lceil c(\phi) + n \rceil$. If $k < 0$, then perform inadmissible transverse $-1/k$--surgery. Then by Proposition~\ref{prop:keep-track-of-slope}, the monodromy becomes $\delta^k \circ \psi = \phi$ so the resulting manifold is $Y$. Denote the resulting contact structure by $\xi_n$. By Proposition~\ref{prop:keep-track-of-slope}, the dividing slope of $N$ becomes 
  \[
    \begin{pmatrix}
      1 & -1\\
      0 & 1
    \end{pmatrix}^{-k}
    \begin{pmatrix}
      n\\
      1
    \end{pmatrix}  
    = 
    \begin{pmatrix}
      k+n\\
      1
    \end{pmatrix}.
  \]
  Thus the dividing slope of $N \subset (Y,\xi_n)$ is $1/(k+n) = 1/\lceil c(\phi) + n \rceil$. If we perform admissible transverse $0$--surgery using $N \subset (Y,\xi_n)$, we recover the rotative torus bundle $(M_{\phi_c},\xi_{\phi_c}^n)$ (notice that $\phi_c=\psi_c$). 

  \begin{figure}[htbp]
    \vspace{0.2cm}
    \begin{overpic}[tics=20]{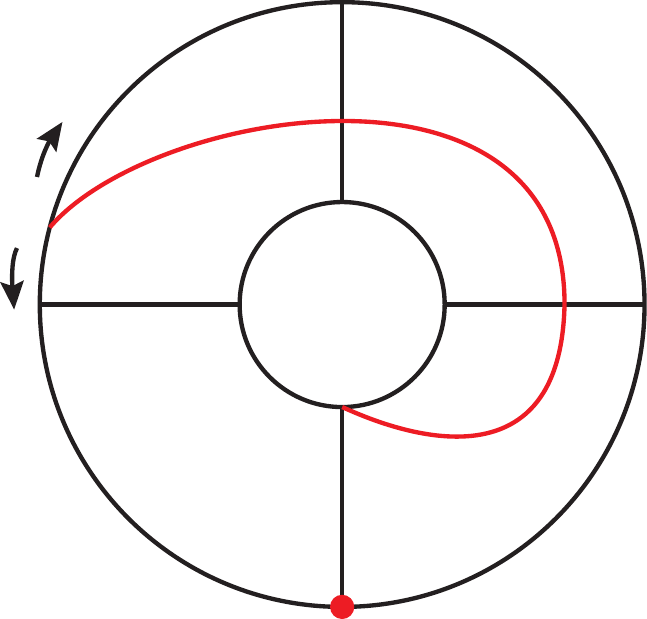}
    \end{overpic}
    \vspace{0.2cm}
    \caption{$\partial D \times I \subset (M_{\phi_c},\xi_{\phi_c}^0)$. If we rotate the red arc in a clockwise way so that the endpoint matches the red point, then we obtain dividing curves of slope $0$ with respect to the product framing. If we rotate the red arc in an anticlockwise way, then we obtain dividing curves of slope $-1$ with respect to the product framing.}
    \label{fig:slope}
  \end{figure}

  Now suppose $\phi$ is not pseudo-Anosov and $\lfloor c(\phi) \rfloor = 0$ Then by Lemma~\ref{lem:arcs}, there is an arc $\alpha$ on $\Sigma$ such that $\phi(\alpha)$ is to the right of $\alpha$ and $\delta^{-1} \circ \phi(\alpha)$ is to the left of $\alpha$. Consider a rotative torus bundle $(M_{\phi_c}, \xi^n_{\phi_c})$. We construct a neighborhood $N$ of $L = (0,0) \times I / (0,0,1) \sim (0,0,0)$ as above but this time, we perturb $A$ in a different way so that we rotate the characteristic foliation near $D \times \{0\}$ in an anticlockwise way to match the characteristic foliations of $T\times\{0\}$ and $A(T\times\{1\})$. See the bottom left drawing of Figure~\ref{fig:foliations}. Then the dividing arc on the $\partial D \times I$ is $t \mapsto (e^{i(2n\pi+\theta)t}, t)$, where $2n\pi < 2n\pi + \theta = f(1) - f(0) \leq 2(n+1)\pi$. Since we rotate $D$ in an anticlockwise way, the dividing slope of $N$ is $-n-1$ with respect to the product framing. See Figure~\ref{fig:slope}. Now perform inadmissible transverse $0$--surgery using $N$ with respect to the product framing and we obtain an open book decomposition. Again, the page is $T \setminus D$ and the monodromy is $\delta^m \circ \phi$ for $m \in \mathbb{Z}$. Take a leaf of the characteristic foliation on $T\times\{1\}$ passing $(0,0)$ and let $\alpha'$ be its restriction to $T \setminus D$. We can identify $T \setminus D$ with $\Sigma$ so that $\alpha$ is isotopic to $\alpha'$. Since we rotate the characteristic foliation near $D\times\{0\}$ in an anticlockwise way, $\delta^m \circ \phi(\alpha)$ is to the right of $\alpha$, and $\delta^{m-1} \circ \phi(\alpha)$ is to the left of $\alpha$. We already know that $\phi(\alpha)$ is to the right of $\alpha$ and $\delta^{-1} \circ \phi(\alpha)$ is to the left of $\alpha$, so $m=0$. Thus the resulting manifold is $Y$ and $K$ is the binding of the open book constructed from the surgery. Let $\xi_n$ be the resulting contact structure. Since the product framing becomes meridional slope of $K$, the dividing slope of $N \subset Y$ becomes $1/(n+1)$. If we perform admissible transverse $0$--surgery using $N \subset (Y,\xi_n)$, then we recover the rotative torus bundle $(M_{\phi_c},\xi^n_{\phi_c})$. Now assume $\lfloor c(\phi) \rfloor \neq 0$. Then there is $\psi$ such that $\lfloor c(\psi) \rfloor = 0$ and $\phi = \delta^k \circ \psi$ for $k = \lfloor c(\phi) \rfloor$. By the same argument as above, there are contact structures $\xi'_n$ for $n\geq0$ on $(\Sigma,\psi)$ such that there is a neighborhood $N$ of the binding and the dividing slope is $1/(n+1)$. Perform (in)admissible transverse $-1/k$-surgery using $N$. Then by Proposition~\ref{prop:keep-track-of-slope}, the monodromy becomes $\delta^k \circ \psi = \phi$ so the resulting manifold is $Y$. Denote the resulting contact structure by $\xi_n$. The dividing slope of $N$ becomes 
  \[
    \begin{pmatrix}
      1 & \pm1\\
      0 & 1
    \end{pmatrix}^{\pm k}
    \begin{pmatrix}
      n+1\\
      1
    \end{pmatrix}  
    = 
    \begin{pmatrix}
      k+n+1\\
      1
    \end{pmatrix}.
  \]
  Thus the dividing slope of $N \subset (Y,\xi_n)$ is $1/(k+n+1) = 1/\lfloor c(\phi)+ n + 1 \rfloor$. If we perform admissible transverse $0$--surgery using $N \subset (Y,\xi_n)$, we recover the rotative torus bundle $(M_{\phi_c},\xi_{\phi_c}^n)$.
\end{proof}

\section{Obstructing Liouville and weak fillability}

In this section, we prove the theorems in Section~\ref{sec:non-liouville}. We first prove Proposition~\ref{prop:mixed}, which is a simple version of Theorem~\ref{thm:mixed}. Then we prove Theorem~\ref{thm:mixed}, Corollary~\ref{cor:planar-torsion}. After that, using the results from Section~\ref{sec:torus-bundles}, we prove Theorem~\ref{thm:non-Liouville} and Theorem~\ref{thm:weak-non-Liouville}.

We say a Legendrian knot $L$ is a \dfn{mixed Legendrian knot} if $L = S_+S_-(L')$ for some Legendrian knot $L'$. In other words, $L$ is a doubly stabilized Legendrian knot with different signs. Let $N(L)$ be a standard neighborhood of $L$. Then $T = \partial N(S_-(L'))$ is the mixed torus for $L$ and its mixed neighborhood is $N(L') \setminus N(L)$. Notice that $T_{-1} = \partial N(L)$ and $T_1 = \partial N(L')$.

The main idea of obstructing Liouville and weak fillability is as follows. Suppose $(W,\omega)$ is a Liouville (resp. weak) filling of $(Y,\xi)$ and $T$ be a mixed torus in $(Y,\xi)$. If we cut $W$ along a properly embedded solid torus $S$ with $\partial S = T$, then we obtain a new Liouville filling $(W',\omega')$ of $(Y',\xi')$. If any component of $(Y',\xi')$ turns out to be not Liouville (resp. weakly) fillable, then the original contact manifold $(Y,\xi)$ cannot be Liouville (resp. weakly) fillable. In general, there exist several possible decompositions and it is hard to determine which gives the genuine decomposition. However, if a contact manifold is obtained from a contact surgery on a mixed Legendrian knot, then there exists a unique possible decomposition.

\begin{proposition} \label{prop:mixed}
  Suppose $L$ is a mixed Legendrian knot in a closed contact $3$--manifold $(Y,\xi)$. Let $(Y_L,\xi_L)$ be the result of Legendrian surgery on $L$. If $\xi$ is not Liouville (resp. weakly) fillable, then $\xi_L$ is also not Liouville (resp. weakly) fillable. 
\end{proposition}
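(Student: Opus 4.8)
The plan is to argue by contradiction. Suppose $(Y_L,\xi_L)$ admits a Liouville (resp.~weak) filling $(W,\omega)$; I will extract from $(W,\omega)$ a Liouville (resp.~weak) filling of $(Y,\xi)$, contradicting the hypothesis. First I will locate a mixed torus inside $(Y_L,\xi_L)$. Writing $L=S_+S_-(L')$, I take nested standard neighborhoods $N(L)\subset N(S_-(L'))\subset N(L')$, so that $T:=\partial N(S_-(L'))$ is a mixed torus of $(Y,\xi)$ with mixed neighborhood $N(L')\setminus\interior N(L)$, and $T_{-1}=\partial N(L)$, $T_1=\partial N(L')$. Since Legendrian surgery on $L$ alters only $\interior N(L)$, which is disjoint from $N(L')\setminus\interior N(L)$, the region $T^2\times[-1,1]$ is untouched; hence $T$ persists as a mixed torus with the same mixed neighborhood inside $(Y_L,\xi_L)$. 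I will also record that in $Y_L$ the torus $T$ bounds a solid torus $V$ on the $T_{-1}$ side — the surgery takes place in the interior of the solid torus $N(S_-(L'))$ along a knot isotopic to its core, so the result is again a solid torus — while the other side is unchanged: $Y_L\setminus\interior V=Y\setminus\interior N(S_-(L'))$.

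Next I will apply Menke's theorem (Theorem~\ref{thm:Menke}) to $(W,\omega)$ and $T$. This yields a Liouville (resp.~weak) filling $(W',\omega')$ of $(Y',\xi')$, where $(Y',\xi')$ is obtained from $Y_L$ by cutting along $T$ and capping the two torus boundaries with tight solid tori $S(s,s_0;l)$ and $S(s,s_0;u)$ for some $s$ with $|s\bigcdot s_0|=1$, anticlockwise of $s_{-1}$ and clockwise of $s_1$ in the Farey graph; accordingly $(W',\omega')$ splits as a union of a filling of each component of $(Y',\xi')$. I then compute the slopes in coordinates on $T=\partial N(S_-(L'))$ where the meridian of $N(S_-(L'))$ has slope $\infty$ and the contact framing of $S_-(L')$ has slope $0$, so $s_0=0$. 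Since $\tb$ decreases by one under stabilization, the dividing slopes of $T_{-1}=\partial N(S_+S_-(L'))$ and $T_1=\partial N(L')$ are $s_{-1}=-1$ and $s_1=1$, and the two elementary basic slices of the mixed neighborhood carry opposite signs, as required.

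The crux will be that this forces the decomposition to be the expected one. The equation $|s\bigcdot s_0|=1$ with $s_0=0$ says $s$ is a Farey neighbor of $0$, i.e.\ $s=1/n$ for some $n\in\Z$ (with the convention $1/0:=\infty$); these are exactly $\infty$ and $\pm1/n$ for $n\geq1$. The requirement that $s$ be anticlockwise of $s_{-1}=-1$ and clockwise of $s_1=1$ places $s$ on the arc of the Farey circle running from $1$ through $\infty$ to $-1$ and avoiding $0$, and the only Farey neighbor of $0$ on that arc is $s=\infty$ — which is precisely the meridional slope of $N(S_-(L'))$. Hence one component of $(Y',\xi')$ is obtained from the unchanged outer piece $Y\setminus\interior N(S_-(L'))$ by refilling with $S(\infty,0;l)$ (or $S(\infty,0;u)$) matching meridian to meridian and $s_0=0$ to the dividing slope; by uniqueness of the tight contact structure on $S(\infty,0;l)$, this component is $(Y,\xi)$ itself. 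The other component is $V$ glued to the remaining tight solid torus, hence a contact structure on a lens space. Therefore the corresponding summand of $(W',\omega')$ is a Liouville (resp.~weak) filling of $(Y,\xi)$, which contradicts the hypothesis and finishes the proof.

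I expect the main obstacle to be the uniqueness step: a generic mixed torus admits a whole family of valid Menke decompositions, one for each admissible $s$, with no a priori reason for any of them to reconstruct the ambient filling. What makes the argument work is that a mixed Legendrian knot furnishes a mixed torus with the smallest possible mixed neighborhood, whose Farey data $(s_{-1},s_0,s_1)=(-1,0,1)$ leaves $s=\infty$ as the only option. A secondary, more bookkeeping-type point is the first step: carefully checking that the mixed neighborhood survives the surgery and still satisfies all conditions in the definition of a mixed torus, and correctly identifying the inner solid torus $V$ and its untouched complement — this requires tracking framings and conventions but presents no real difficulty.
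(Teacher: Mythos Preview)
Your proof is correct and follows essentially the same argument as the paper's: locate the mixed torus $T=\partial N(S_-(L'))$ with Farey data $(s_{-1},s_0,s_1)=(-1,0,1)$, observe that it survives Legendrian surgery, apply Menke's theorem, and use the Farey constraints to pin down $s=\infty$, which forces one component of the decomposition to be $(Y,\xi)$. The only cosmetic difference is that the paper identifies the other component explicitly as $(S^3,\xi_{std})$ (since the surgery coefficient in these coordinates is $-2$), whereas you say ``a lens space''; both are fine.
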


\begin{proof}
  Since $L$ is a mixed Legendrian knot, $L = S_+S_-(L')$ for some Legendrian knot $L'$. We use the contact framing of $S_-(L')$ as our prescribed framing. With respect to this framing, the dividing slope of the mixed torus $T$ for $L$ is $0$, and the slopes of $T_{-1}$ and $T_{1}$ are $-1$ and $1$, respectively. Since this framing is one greater than the contact framing of $L$, the surgery coefficient becomes $-2$ (recall that Legendrian surgery is contact $(-1)$--surgery). Thus $Y_L$ contains $T$, $T_{-1}$ and $T_{1}$. Suppose $(Y_L, \xi_L)$ is Liouville fillable. Then by Theorem~\ref{thm:Menke}, we can cut $Y_L$ along $T$ and glue two solid tori $S(s,0;l)$ and $S(s,0;u)$ to each boundary component for some $s\in\mathbb{Q}\cup\{\infty\}$ and obtain a Liouville fillable contact manifold $(Y',\xi')$. However, $s = \infty$ is the only (extended) rational number satisfying 
  \[
    |s| > 1 \text{ and } \left|s \bigcdot \frac01\right| = 1
  \]  
  ($|s|>1$ implies that $s$ is anticlockwise of $-1$ and clockwise of $1$, and vice versa). Since the surgery coefficient is $-2$, we have the following decomposition: 
  \[
    Y_L \setminus T = (Y_L \setminus S(-2,0;l)) \sqcup S(-2,0;l) 
  \] 
  and $Y \setminus S(\infty,0;l)$ contains $T_1$. Since $Y_L \setminus S(-2,0;l) = Y \setminus S(\infty,0;l)$, we have 
  \begin{gather*}
    Y = (Y_L \setminus S(-2,0;l)) \cup S(\infty,0;l)\\
    S^3 = S(-2,0;l) \cup S(\infty,0;u) 
  \end{gather*}
  Since $S(\infty,0;l)$ has a unique tight contact structure, $(Y',\xi')$ is a union of $(Y,\xi)$ and $(S^3,\xi_{std})$. Since $(Y,\xi)$ is not Liouville (resp.~weakly) fillable, it contradicts that $(Y',\xi')$ is Liouville (resp.~weakly) fillable.
\end{proof}

\begin{remark}
  Proposition~\ref{prop:mixed} implies that a Weinstein cobordism from a strongly fillable contact manifolds obtained by attaching a Weinstein $2$-handle along a mixed Legendrian knot does not improve the level of fillability. 
\end{remark}

Now we generalize the proposition to any contact surgery.

\begin{proof}[Proof of Theorem~\ref{thm:mixed}]
  We first deal with the case $r \geq 0$.  Since $L$ is a mixed Legendrian knot, $L = S_+S_-(L')$ for some Legendrian knot $L'$. We use the contact framing of $S_-(L')$ as our prescribed framing. With respect to this framing, the dividing slope of the mixed torus $T$ for $L$ is $0$ and the slopes of $T_{-1}$ and $T_1$ are $-1$ and $1$, respectively. Since this framing is one greater than the contact framing of $L$, the surgery coefficient becomes $r-1$. To perform contact $(r)$--surgery, we remove a standard neighborhood of $L$ and glue a basic slice $B$ with slopes $\infty$ and $-1$ to the boundary. After that, we attach some tight $S(r-1,\infty;l)$ to the new boundary. Since $|\infty \bigcdot 1| = 1$, the union $B \cup T^2 \times [-1,1]$ becomes also a basic slice with slopes $\infty$ and $1$. Since $T^2 \times [-1,1]$ is virtually overtwisted however, it contradicts that any basic slice is universally tight. Thus the union becomes overtwisted and contact $(r)$--surgery produces an overtwisted contact structure.
  
  Now suppose $r < 0$. Since $r-1 < -1$, the tori $T$, $T_{-1}$, and $T_1$ are still contained in $Y_{(r)}$. Suppose $\xi_{(r)}$ is Liouville fillable. Then by Theorem~\ref{thm:Menke}, we can cut $Y_{(r)}$ along $T$ and glue two solid tori $S(s,0;l)$ and $S(s,0;u)$ to each boundary component for some $s\in\mathbb{Q}\cup\{\infty\}$ and obtain a Liouville fillable contact manifold $(Y',\xi')$. However, $s = \infty$ is the only (extended) rational number satisfying 
  \[
    |s| > 1 \text{ and } \left|s \bigcdot \frac01\right| = 1
  \]  
  ($|s|>1$ implies that $s$ is anticlockwise of $-1$ and clockwise of $1$, and vice versa). Since the surgery coefficient is $r-1$, we have the following decomposition: 
  \[
    Y_{(r)}(L) \setminus T = (Y \setminus S(r-1,0;l)) \sqcup S(r-1,0;l)
  \]  
  and $Y \setminus S(\infty,0;l)$ contains $T_1$. Since $Y_{(r)}(L) \setminus S(r-1,0;l) = Y \setminus S(\infty,0;l)$, we have 
  \begin{gather*}
    Y = (Y_{(r)}(L) \setminus S(r-1,0;l)) \cup S(\infty,0;l)\\
    L(p,q) = S(r-1,0;l) \cup S(\infty,0;u) \;\;\text{where}\;\; \frac pq = \frac{1}{r-1}. 
  \end{gather*} 
  Since $S(\infty,0;l)$ has a unique tight contact structure, $(Y',\xi')$ is a union of $(Y,\xi)$ and some lens space. Since $(Y,\xi)$ is not Liouville (resp.~weakly) fillable, it contradicts that $(Y',\xi')$ is Liouville (resp.~weakly) fillable. 
\end{proof}

\begin{proof}[Proof of Corollary~\ref{cor:planar-torsion}]
  Wendl \cite{Wendl:hierarchy} showed that if $(Y,\xi)$ contains a planar $k$-torsion for $k \geq 0$, then it is not strongly fillable. Thus by Theorem~\ref{thm:mixed}, $(Y_{(r)},\xi_{(r)})$ is not Liouville fillable for any $r$. 
\end{proof}

Now we are ready to prove Theorem~\ref{thm:non-Liouville}. We use Proposition~\ref{prop:measure-slope} to find correct surgery coefficients and construct strongly fillable contact structures. Then we use Theorem~\ref{thm:mixed} to obstruct Liouville fillability. 

\begin{proof}[Proof of Theorem~\ref{thm:non-Liouville}]
  We start with the pseudo-Anosov case. By Proposition~\ref{prop:measure-slope}, there exists a contact structure $\xi_1$ on $Y$ and a neighborhood $N$ of $K$ with dividing slope $1 / \lceil c(\phi) + 1 \rceil$. We can perform admissible transverse $0$--surgery using $N$ and obtain a rotative torus bundle $(M_{\phi_c},\xi^1_{\phi_c})$, which is weakly fillable but not strongly fillable by Theorem~\ref{thm:torus-fillability}. After the surgery, we can calculate the dividing slope of $N$ with respect to the product framing of $M_{\phi_c}$ as follows: 
  \[
    \begin{pmatrix}
      0 & 1\\
      -1 & 0
    \end{pmatrix}
    \begin{pmatrix}
      \lceil c(\phi) + 1 \rceil \\ 1
    \end{pmatrix}
    =
    \begin{pmatrix}
      1 \\ -\lceil c(\phi) + 1 \rceil 
    \end{pmatrix}.
  \]
  Let $L'$ be the core Legendrian knot of $N \subset (M_{\phi_c},\xi_{\phi_c}^1)$, and $L = S_+S_-(L')$. By Theorem~\ref{thm:mixed}, any contact $(r)$--surgery on $L$ for $r < 0$ produces a contact structures without Liouville fillings. we will convert the contact surgery coefficient $(r)$ into a surgery coefficient for $K$. First, notice that the contact framing of $L$ is $-\lceil c(\phi) \rceil - 3$ less than the product framing. Thus the contact surgery coefficient $(r)$ becomes $r -\lceil c(\phi) \rceil - 3$ with respect to the product framing. Now we calculate this surgery coefficient with respect to the Seifert framing of $K$ as follows:
  \[
    \begin{pmatrix}
      0 & -1\\
      1 & 0
    \end{pmatrix}
    \begin{pmatrix}
      1\\
      r -\lceil c(\phi) \rceil - 3
    \end{pmatrix}  
    = 
    \begin{pmatrix}
      \lceil c(\phi) \rceil + 3 - r \\
      1
    \end{pmatrix}.
  \]
  Since $r \in (-\infty, 0)$, the surgery coefficient is in one of the following intervals: 
  \[
    \frac{1}{\lceil c(\phi) \rceil + 3 - r} \in
    \begin{cases}  
      (0,\frac{1}{\lceil c(\phi) \rceil + 3}) &\text{if } \lceil c(\phi) \rceil > -3,\\
      (0,\infty) &\text{if } \lceil c(\phi) \rceil = -3,\\
      (0,\infty] \cup (-\infty,\frac{1}{\lceil c(\phi) \rceil + 3}) &\text{if } \lceil c(\phi) \rceil < -3, 
    \end{cases}
  \]
  which is just $\mathcal{R}(1/n_K)$.

  Since $Y$ is a rational homology $3$--sphere and $K$ is a null-homologous knot, $Y_r(K)$ is also a rational homology $3$--sphere if $r\neq0$. Since $0 \notin \mathcal{R}(1/n_K)$, all manifolds constructed above are rational homology $3$--spheres. By the result of Ohta and Ono \cite{OO:weak-to-strong}, we can perturb the symplectic structure of any weak filling of a rational homology $3$--sphere to be a strong filling. Thus the resulting contact structures are strongly fillable, but not Liouville fillable.

  The proof is identical when $\phi$ is not pseudo-Anosov. The only difference is that the dividing slope of $N$ in $(Y,\xi_1)$ is $1 / \lfloor c(\phi) + 2 \rfloor$, so the surgery coefficients are in $\mathcal{R}(1/n_K)$ as desired.
\end{proof}

\begin{proof}[Proof of Theorem~\ref{thm:weak-non-Liouville}]
  The proof is identical to the proof of Theorem~\ref{thm:non-Liouville}, except that $Y_r(K)$ is not a rational homology sphere in general, so we cannot perturb a weak filling to a strong one. 
\end{proof}

\section{Contact structures on \texorpdfstring{$-\Sigma(2,3,6n\pm1)$}{-Sigma(2,3,6n+1)}}

In this section, we prove Theorem~\ref{thm:triangle}. To do so, we first review the construction of the tight contact structures on $-\Sigma(2,3,6n\pm1)$ by Ghiggini and Van-Horn-Morris \cite{GVHM:classification}, and by Tosun \cite{Tosun:classification}. We notice that for most of these contact structures, they used mixed Legendrian knots to construct them, so we can apply Theorem~\ref{thm:mixed}. 

\begin{proof}[Proof of Theorem~\ref{thm:triangle}]
Ghiggini and Van-Horn-Morris \cite{GVHM:classification} constructed tight contact structures on $-\Sigma(2,3,6n-1)$ as follows. Let $(M,\eta^i)$ be a rotative torus bundle where $M$ is obtained by $0$--surgery on the right-handed trefoil $T_{2,3}$. In $(M,\eta^i)$, they constructed Legendrian knots $L_{l,r}$ where $j = l-r$ and $n = l+r+i+2$. See Figure~3 of \cite{GVHM:classification}. They showed that Legendrian surgery on $L_{l,r}$ produces $\eta^n_{i,j}$. Notice that $L_{l,r}$ is a mixed Legendrian knot if $l,r > 0$.  Thus we have
\begin{gather*}
i = n - 2 - l -r < n - 3,\\
|j| = |l-r| < |l + r| = n-i-2.  
\end{gather*}  

Conversely, if $l = 0$ (the same argument works when $r=0$), then $|j| = n-i-2$. This implies that any $\eta^n_{i,j}$ with $i < n-3$ and $|j| < n-i-2$ are obtained from Legendrian surgery on a mixed knot. Since $\eta^i$ are not Liouville fillable for $i>0$ by Theorem~\ref{thm:torus-fillability}, we can apply Proposition~\ref{prop:mixed} and $\eta^n_{i,j}$ are not Liouville fillable. 

Tosun \cite{Tosun:classification} constructed tight contact structures on $-\Sigma(2,3,6n+1)$ as follows. Let $(M,\xi^i)$ be a rotative torus bundle where $M$ is obtained by $0$--surgery on the left-handed trefoil $T_{2,-3}$. In $(M,\xi^i)$, he constructed Legendrian knots $L_{l,r}$ where $j = l-r$ and $n = l+r+i+1$. See Figure~6 of \cite{Tosun:classification}. He showed that Legendrian surgery on $L_{l,r}$ produces $\xi^n_{i,j}$. Notice that $L_{l,r}$ is a mixed Legendrian knot if $l,r > 0$.  Thus we have
\begin{gather*}
i = n - 1 - l -r < n - 2,\\
|j| = |l-r| < |l + r| = n-i-1.  
\end{gather*}  

Conversely, if $l = 0$ (the same argument works when $r=0$), then $|j| = n-i-1$. This implies that any $\xi^n_{i,j}$ with $i < n-2$ and $|j| < n-i-1$ are obtained from Legendrian surgery on a mixed knot. Since $\xi^i$ are not Liouville fillable for $i>0$ by Theorem~\ref{thm:torus-fillability}, we can apply Proposition~\ref{prop:mixed} and $\xi^n_{i,j}$ are not Liouville fillable.
\end{proof}

\bibliography{references}
\bibliographystyle{plain}
\end{document}